\newcommand{\R}{{\mathbb R}}
\def\ab#1{\left|#1\right|}
\def\inpro#1{\left\langle #1 \right\rangle}
\def\Set#1{\left\{\,#1\,\right\}}\def\gp#1{\left(#1\right)}
\def\bk#1{\left[#1\right]}
\def\gp#1{\left(#1\right)}
\newtheorem{theorem}{Theorem}[section]
\newtheorem{proposition}[theorem]{Proposition}
\newtheorem{lemma}[theorem]{Lemma}
\newtheorem{corollary}[theorem]{Corollary}
\newtheorem{conj}[theorem]{Conjecture}
\theoremstyle{definition}
\newtheorem{definition}{Definition}[section]
\theoremstyle{remark}
\newtheorem{remark}{Remark}
\theoremstyle{observation}
\newtheorem{observation}{Observation}
\newcommand{\cupdot}{\charfusion[\mathbin]{\cup}{\cdot}}
\newcommand{\bigcupdot}{\charfusion[\mathop]{\bigcup}{\cdot}}
\def\moverlay{\mathpalette\mov@rlay}
\def\mov@rlay#1#2{\leavevmode\vtop{%
   \baselineskip\z@skip \lineskiplimit-\maxdimen
   \ialign{\hfil$\m@th#1##$\hfil\cr#2\crcr}}}
\newcommand{\charfusion}[3][\mathord]{
    #1{\ifx#1\mathop\vphantom{#2}\fi
        \mathpalette\mov@rlay{#2\cr#3}
      }
    \ifx#1\mathop\expandafter\displaylimits\fi}
\begin{document}

\title{Minimal scalings and structural properties of scalable frames}

\author{Alice Chan}
\address{Department of Mathematics, University of California San Diego}
\email{azc02010@mymail.pomona.edu}

\author{Rachel Domagalski}
\address{Department of Mathematics, Central Michigan University}
\email{Domag1rj@cmich.edu}

\author{Yeon Hyang Kim}
\address{Department of Mathematics, Central Michigan University}
\email{kim4y@cmich.edu}

\author{Sivaram K. Narayan}
\address{Department of Mathematics, Central Michigan University}
\email{naray1sk@cmich.edu}

\author{Hong Suh}
\address{Department of Mathematics, Pomona College}
\email{Hong.suh@pomona.edu}

\author{Xingyu Zhang}
\address{Department of Industrial Engineering \& Operations Research, Columbia University}
\email{xz2464@columbia.edu}

\thanks{Research supported  by NSF-REU grant DMS 11-56890.}

\subjclass[2010]{Primary 42C15, 05B20, 15A03}

\date{ 2016.}

\keywords{Frames, Tight frames, Diagram vectors, Scalable frames, Minimal scalings}

\begin{abstract}
For a unit-norm frame $F = \{f_i\}_{i=1}^k$ in $\R^n$, a scaling is a  vector 
$c=(c(1),\dots,c(k))\in \R_{\geq 0}^k$ such that $\{\sqrt{c(i)}f_i\}_{i =1}^k$ is a Parseval frame in $\R^n$. 
If such a scaling exists, $F$ is said to be scalable. 
A scaling $c$ is a minimal scaling if $\{f_i : c(i)>0\}$ has no proper scalable subframe. 
In this paper, 
we provide an algorithm to find all possible  contact points for the John's decomposition of the identify by applying  the b-rule algorithm to a linear system which is associated  with a scalable frame. We also give an estimate of the number of minimal scalings of a scalable frame. We provide 
a characterization of when  minimal scalings are affinely dependent.
 Using this characterization, we can conclude that  all strict scalings $c=(c(1),\dots,c(k))\in \R_{> 0}^k$ of $F$ have the same structural property. That is, the collections of all tight subframes of strictly scaled frames are the same up to a permutation of the frame elements.
We also present the uniqueness of  orthogonal partitioning property of any set of minimal scalings, which provides all possible tight subframes of a given scaled frame. 
\end{abstract}

\maketitle

\section{Introduction}\label{intro}
A frame in $\R^n$ is a spanning set, and a tight frame $\Set{f_i}_{i=1}^k$ with $k \ge n$ is a frame which provides an orthonormal basis-like representation, i.e., 
there exists a positive constant $\lambda$ such that for any $f$ in $\R^n,$
\begin{equation}
\label{recon}
f =  \lambda  \sum_{i=1}^k \inpro{f, f_i} f_i.
\end{equation}
If $\lambda =1$ in (\ref{recon}) then $\Set{f_i}_{i=1}^k$ is said to be a Parseval frame. 
Many early applications of tight frames were  in signal processing. However, nowadays the theory and applications of tight frames have gone beyond pure and applied mathematics to  other areas such as engineering, computer science, and medicine. 
Applications of tight frames are growing because tight frames are redundant systems that have simple reconstruction properties mentioned above and provide optimal numerical stability.  Tight frames  can capture signal characteristics and  are flexible for achieving better approximation and other desirable features. 
One of the active areas of research is  the construction of tight frames.  Various methods of constructing tight frames have been developed for specific types of frames, including unit-norm tight frames, equiangular tight frames, tight frames of vectors having a given sequence of norms,  tight fusion frames, sparse equal norm tight frames using spectral tetris, etc 
\cite{ BM03, STDH07, CMKLT06,  CFHWZ12, HKLW07}.  
 In the last  couple of years the theme of scalable frames have been developed as a method of constructing tight frames from general frames by manipulating the length of frame vectors. 
 Scalable frames  maintain erasure resilience and sparse expansion properties of frames 
 \cite{KOPT13, CC13, KOF13, CKLMNPS14,   CKOPW15}. 
 In this paper,  we further explore scalable frames. 
It is known that the set of all scalings of a frame forms a convex polytope whose vertices correspond to the minimal scalings. 
In this paper, 
we give a method to find a subset of contact points which provides a decomposition of the identity, and an estimate for  the number of minimal scalings of a scalable frame. We provide 
a characterization of when  minimal scalings are affinely dependent.
 Using this characterization, we can conclude that  all strict scalings $c=(c(1),\dots,c(k))\in \R_{> 0}^k$ of $F$ have same tight subframes. 
We also present the uniqueness of orthogonal partitioning property of any set of minimal scalings, which provides all possible tight subframes of a given scaled frame. 
\section{Preliminaries}
In this section we recall  basic properties of tight frames and scalable frames in $\R^n$. We present a few results that will be used later in the
paper. For basic facts about scalable frames we refer to  \cite{KOPT13, CC13, KOF13, CKLMNPS14, CKOPW15, CCNST16}. 

\begin{definition}
A sequence $\{f_i\}_{i=1}^k\subseteq \R^n$, is a frame for $\R^n$ with frame bounds $0<A\leq B<\infty$ if for all $f \in \R^n$,
\begin{equation}\label{framedef}
A|| f ||^2\leq \sum_{i=1}^k |\langle f,f_i\rangle|^2\leq B|| f ||^2 .
\end{equation}
\end{definition}
Throughout this paper, we assume that frame elements are nonzero vectors. 
Often it is useful to express frames both as sequences as well as matrices. Therefore we abuse  notation and denote a frame $F=\{f_i\}_{i=1}^k$  as a $n \times k$ matrix $F$ whose $k$ column vectors are $f_i$, $i = 1, \ldots, k$.

A \emph{unit-norm frame} is a frame such that each vector in the frame has norm one. 
A frame $\{f_i\}_{i\in I}$ is said to be $\lambda-tight$ if $\lambda=A=B$ in \eqref{framedef} and is said to be $Parseval$ if $A=B=1$. 

We note that a frame $F$ is a Parseval frame if and only if 
\begin{equation}\label{Parseval}
FF^t = I_n.
\end{equation}

Let $\{v_i\}_{i \in I}$ be a set of vectors  in $\R^k$. 
 The set of all convex combinations of $\{v_i\}_{i \in I}$ is called the \emph{convex hull} of $\{v_i\}_{i \in I}$ and is defined as
  \begin{align*}
    conv\{v_i\}_{i \in I} :=\left\{\sum_{i \in I}\alpha_iv_i : \alpha_i \geq 0, \sum_{i \in I}\alpha_i=1\right\}.
  \end{align*}
   We also note that    a \emph{polytope} in $\R^k$ is a convex hull of finitely many points in $\R^k$ and 
   the relative interior of $conv\{v_i\}_{i \in I}$, denoted 
   $(conv\{v_i\}_{i \in I})^\circ$, is defined as 
\begin{equation}
\label{eq_1}
 (conv\{v_i\}_{i \in I})^\circ   :=\left\{\sum_{i \in I}\alpha_iv_i : \alpha_i > 0, \sum_{i \in I}\alpha_i=1\right\}.
\end{equation}

A \emph{face} of a convex polytope is any intersection of the polytope with a half space such that none of the relative interior points of the polytope lie on the boundary of the half space. If a polytope is $k$-dimensional, its facets are the $(k -1)$-dimensional faces, its edges are the $1$-dimensional faces, and its vertices are the $0$-dimensional faces. 

 The \emph{affine hull} of $\{v_i\}_{i \in I}$ is defined to be 
$$ \text{aff}\{v_i\}_{i \in I}:=\{\sum_{i \in I}\alpha_iv_i : \sum_{i \in I} \alpha_i=1\}.$$
 The set $\{v_i\}_{i \in I}$ is \emph{affinely dependent} if there exists $i \in I$ such that $v_i \in \mbox{aff}\{v_j\}_{j \in I \setminus\{i\}}$. This is equivalent to the existence of $\alpha_i$, $i \in I$ not all zeros such that 
 both $\sum_{i \in I} \alpha_i v_i =0  $  and $\sum_{i \in I} \alpha_i  =0 $. 

Let $w=(w(1),\ldots,w(k)) \in \R^k$. The \emph{support} of $w$, denoted by $supp(w)$, is defined as $\{i : w(i) \neq 0\}$.

Let $F=\{f_i\}_{i=1}^k$ be a unit-norm  frame in $\R^n$. We call $$c=(c(1),\ldots,c(k)) \in \R^k_{\geq 0}$$ a \emph{scaling} of $F$ 
if the scaled frame $ \{ \sqrt{c(i)} f_i\}_{i=1}^k$ is a Parseval frame for $\R^n$. We denote the scaled frame by $cF$.  If a scaling exists, then the unit-norm frame $F$ is said to be \emph{scalable}. If $c$ is a scaling with $supp(c)=\{1,\ldots,k\}$, then $c$ is called a \emph{strict} scaling and the unit-norm frame $F$ is said to be \emph{strictly scalable}.
A scaling $c$ is a \emph{minimal scaling} if $\{f_i : c(i)>0\}$ has no proper scalable subframe. 
We denote the set of all scalings and the set of all minimal scaling  of a scalable frame $F$ by  $\mathcal{C} (F)$ and  
$\mathcal{M} (F)$, respectively.

For any vector \(f\in\mathbb{R}^n\), we define the diagram vector associated with \(f\), denoted \(\tilde{f}\), by
\begin{equation*}
\tilde{f} := 
\frac{1}{\sqrt{n-1}}
\begin{bmatrix}f^2(1)-f^2(2)\\  \vdots  \\ f^2(n-1)-f^2(n) \\  
\sqrt{2n}f(1)f(2) \\ \vdots  \\  \sqrt{2n}f(n-1)f(n)
 \end{bmatrix}\in\mathbb{R}^{n(n-1)},
\end{equation*}
where the difference of squares 
$f^2(i)- f^2(j)$ and the 
 product \(f(i)f(j)\)  occur exactly once for \(i < j, \ i = 1, 2, \cdots, n-1.\) 
The diagram vectors give us the following characterization of a tight frame.
\begin{theorem}[\cite{CKLMNS13,CKLMNPS14}]
\label{charTight}
Let \(\{f_i\}_{i=1}^k\) be a sequence of vectors in \(\mathbb{R}^n\), not all of which are zero. Then \(\{f_i\}_{i=1}^k\) is a tight frame if and only if \(\sum_{i=1}^k\tilde{f_i}=0\). 
\end{theorem}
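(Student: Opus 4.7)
The plan is to rewrite both sides of the equivalence in terms of the frame operator $S=\sum_{i=1}^k f_i f_i^t$ and then read off the claim by an entrywise comparison. I will use the standard criterion that a family $\{f_i\}_{i=1}^k$ in $\mathbb{R}^n$, not all of which vanish, is a tight frame if and only if $S=\lambda I_n$ for some $\lambda>0$; the positivity of $\lambda$ is automatic, since $\mathrm{tr}(S)=\sum_{i=1}^k\|f_i\|^2>0$ under the hypothesis, so the nonzero hypothesis is used only to pass from ``scalar multiple of the identity'' to ``positive scalar multiple of the identity.''

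First I would write out the entries of $S$: the diagonal entry is $S_{aa}=\sum_{i=1}^k f_i^2(a)$ and the off-diagonal entry is $S_{ab}=\sum_{i=1}^k f_i(a)f_i(b)$ for $a\neq b$. The matrix equation $S=\lambda I_n$ is therefore equivalent to the two families of scalar equations $S_{aa}-S_{bb}=0$ and $S_{ab}=0$, each taken over all pairs $a<b$ with $a\in\{1,\dots,n-1\}$.

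Next I would sum the defining formula for $\tilde{f}$ over $i=1,\dots,k$ and match blocks. The coordinates in the ``differences'' block of $\sum_{i=1}^k \tilde{f_i}$ are $\tfrac{1}{\sqrt{n-1}}\sum_{i=1}^k(f_i^2(a)-f_i^2(b))=\tfrac{1}{\sqrt{n-1}}(S_{aa}-S_{bb})$, and the coordinates in the ``products'' block are $\tfrac{\sqrt{2n}}{\sqrt{n-1}}\sum_{i=1}^k f_i(a)f_i(b)=\tfrac{\sqrt{2n}}{\sqrt{n-1}}\,S_{ab}$, one coordinate for each pair $a<b$. Since the scalar factors $1/\sqrt{n-1}$ and $\sqrt{2n}/\sqrt{n-1}$ are nonzero, the identity $\sum_{i=1}^k \tilde{f_i}=0$ is equivalent to exactly the two families of equations isolated above, proving the equivalence with ``$S=\lambda I_n$ for some scalar $\lambda$'' and hence, by the opening remark, with the tight-frame property.

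There is no real obstacle here; the proof is essentially a bookkeeping exercise, matching the two blocks of the diagram vector coordinate-by-coordinate with the diagonal and off-diagonal parts of the frame operator. The only point that requires a small piece of care is recognizing that the diagram vector is constructed to carry all $\binom{n}{2}$ differences $f^2(a)-f^2(b)$ with $a<b$ (not just consecutive ones), so that ``all diagonal entries of $S$ agree'' is captured as a genuine equivalence rather than as a one-sided implication.
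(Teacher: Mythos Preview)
The paper does not supply its own proof of this theorem; it is quoted from \cite{CKLMNS13,CKLMNPS14} and stated without argument. Your proof is correct and is essentially the standard one found in those references: the diagram vector is designed precisely so that its coordinates, summed over the frame, recover (up to nonzero scalars) the conditions $S_{aa}-S_{bb}=0$ and $S_{ab}=0$ that characterize $S=\sum_i f_i f_i^t$ being a scalar multiple of the identity, and your use of the ``not all zero'' hypothesis to force $\lambda>0$ via $\mathrm{tr}(S)>0$ is exactly the right closing step. Your final remark about needing all $\binom{n}{2}$ differences (not just consecutive ones) is also on point and matches the paper's definition of $\tilde{f}$.
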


We use the diagram vectors of a given unit-norm frame to characterize scalable  frames.

\begin{theorem}[\cite{CKLMNPS14}, Proposition 3.6]\label{scale}
Let  \(\{f_i\}_{i=1}^k\) be  a unit-norm frame for $\R^n$ and  $c=(c(1), \cdots, c(k))$ be a vector in $R^k_{\ge 0}$. 
Let $\tilde{G} := \gp{ \inpro{\tilde{f_j},\tilde{ f_i}} }_{i, j=1}^k$ be the Gramian associated to the diagram vectors $\Set{\tilde{f}_i}_{i=1}^k$. 
Then  $cF $ is a Parseval frame  for $\R^n$ if and only if the vector 
 $c$  
  belongs to the null space of $\tilde{G}$ and $c(1) + \ldots + c(k) =n$. 
\end{theorem}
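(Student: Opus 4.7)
The plan is to reduce the Parseval condition on $cF$ to the two clean conditions stated via Theorem \ref{charTight} and a trace computation. The single crucial observation is that the diagram-vector construction is quadratically homogeneous: directly from the definition, for any scalar $a\ge 0$ and any $f\in\R^n$,
$$\widetilde{af}=a^2\tilde f.$$
Applying this to $af=\sqrt{c(i)}\,f_i$ gives $\widetilde{\sqrt{c(i)}\,f_i}=c(i)\tilde{f_i}$. Hence, by Theorem \ref{charTight}, the scaled family $cF=\{\sqrt{c(i)}f_i\}_{i=1}^k$ is a tight frame if and only if
$$\sum_{i=1}^k c(i)\tilde{f_i}=0.$$

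Next I would rephrase this sum in matrix language. Let $\tilde F$ be the matrix whose $i$-th column is $\tilde f_i$; then the tightness condition above is exactly $\tilde F c=0$. Since the null space of any matrix $A$ coincides with that of $A^tA$, and $\tilde G=\tilde F^t\tilde F$ by definition, one has
$$\tilde F c=0\ \Longleftrightarrow\ \tilde G c=0.$$
Thus $cF$ is a tight frame if and only if $c\in\ker\tilde G$. This handles the tightness half of the equivalence.

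It remains to pin down the tight constant. Recall from (\ref{Parseval}) that a frame $H$ is Parseval iff $HH^t=I_n$. Writing $cF$ as a matrix, $(cF)(cF)^t=\sum_{i=1}^k c(i)f_if_i^t$, so Parseval says $\sum c(i)f_if_i^t=I_n$. Taking the trace and using $\|f_i\|=1$ yields $\sum_{i=1}^k c(i)=n$, which gives one direction. For the converse, a tight frame satisfies $(cF)(cF)^t=\mu I_n$ for some $\mu\ge 0$; the same trace calculation gives $\mu n=\sum c(i)=n$, so $\mu=1$ and $cF$ is Parseval. Combining the two paragraphs yields the stated biconditional.

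No single step looks delicate: the main thing to check carefully is the degree-$2$ homogeneity identity $\widetilde{af}=a^2\tilde f$, which follows entry-by-entry from the formula for $\tilde f$ (each coordinate is a homogeneous quadratic in the entries of $f$). Everything else is bookkeeping: translating a vanishing linear combination of the $\tilde f_i$'s into a null-space condition on the Gramian, and separating the Parseval normalization from tightness via the trace identity.
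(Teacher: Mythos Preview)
The paper does not supply its own proof of this statement; it is quoted from \cite{CKLMNPS14}, with the additional normalization $c(1)+\cdots+c(k)=n$ appended (as the paper notes just after the theorem) to single out Parseval among tight frames. Your argument is correct and is the natural one: the quadratic homogeneity $\widetilde{af}=a^{2}\tilde f$ together with Theorem~\ref{charTight} reduces tightness of $cF$ to $\tilde Fc=0$, the standard identity $\ker\tilde F=\ker(\tilde F^{t}\tilde F)=\ker\tilde G$ converts this to the Gramian null-space condition, and the trace computation pins down the Parseval constant. There is nothing to compare against in this paper, and your proof is exactly what one expects the cited reference to contain.
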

We note that the condition $c(1) + \ldots + c(k) =n$ in the above theorem is added to Proposition 3.6 in  \cite{CKLMNPS14}  in order for  $cF$ to be a Parseval frame.

\section{Minimal scalings}
A connection between frames and the existence of John's decomposition of the identify have been studied earlier, \cite{CKOPW15, Ver01}. 
In this paper, 
we provide a method to find all possible  contact points for the John's decomposition of the identify by applying  the b-rule algorithm to a linear system which is associated  with a scalable frame from Theorem \ref{scale}. We also give an estimate of the number of minimal scalings of a scalable frame.

Given a scalable frame $F$ the authors of \cite{CC13} showed that the set of all scalings $\mathcal{C}(F) $ is a convex polytope whose vertices correspond to the finite  set of minimal scalings $\mathcal{M}(F) $. 

\begin{theorem}[\cite{CC13}]\label{spolytope}
  Let $F=\{f_i\}_{i=1}^k$ be a unit-norm frame in $\R^n$.  Then we have 
  $$ \mathcal{C}(F) = conv\gp{\mathcal{M}(F) } .$$
\end{theorem}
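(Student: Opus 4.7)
The plan is to first establish that $\mathcal{C}(F)$ is a bounded convex polytope, and then identify its vertices as exactly the minimal scalings; the result will then follow from the standard fact that a bounded polytope is the convex hull of its vertices.

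For the first step, Theorem \ref{scale} characterizes $\mathcal{C}(F)$ as the set of vectors $c \in \R^k$ satisfying the linear constraints $\tilde{G}c = 0$, $\sum_{i=1}^k c(i) = n$, and $c(i) \geq 0$ for all $i$. This is the intersection of an affine subspace with the nonnegative orthant, cut down by the hyperplane $\sum c(i) = n$; boundedness is immediate since $0 \leq c(i) \leq n$. Hence $\mathcal{C}(F)$ is a bounded polytope.

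For the main step, I would prove the following equivalence: a scaling $c \in \mathcal{C}(F)$ is a vertex if and only if $c \in \mathcal{M}(F)$. For the contrapositive of one direction, assume $c$ is not minimal, so the subframe indexed by $\text{supp}(c)$ has a proper scalable subframe with scaling $d$, where $\text{supp}(d) \subsetneq \text{supp}(c)$ and $\sum d(i)=n$. Setting $v := c - d$ gives a nonzero vector in $\text{null}(\tilde{G})$ with $\sum v(i) = 0$, and since $c(i) > 0$ on all of $\text{supp}(c)$, the points $c \pm \varepsilon v$ lie in $\mathcal{C}(F)$ for sufficiently small $\varepsilon > 0$, exhibiting $c$ as a non-extreme point. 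For the other direction, assume $c$ is not a vertex, so $c = t c_1 + (1-t)c_2$ with $c_1 \neq c_2$ in $\mathcal{C}(F)$ and $t \in (0,1)$. Nonnegativity of $c_1, c_2$ forces $\text{supp}(c_j) \subseteq \text{supp}(c)$; if either inclusion is strict we immediately get a proper scalable subframe, so both supports equal $\text{supp}(c)$. Then $v := c_1 - c_2$ is a nonzero element of $\text{null}(\tilde{G})$ with $\sum v(i) = 0$ supported in $\text{supp}(c)$, so $v$ has both positive and negative entries; sliding $c_1 + sv$ until the first coordinate hits zero produces a scaling whose support is strictly smaller than $\text{supp}(c)$, contradicting minimality.

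Combining these, the vertex set of the polytope $\mathcal{C}(F)$ coincides with $\mathcal{M}(F)$, and the conclusion $\mathcal{C}(F) = \text{conv}(\mathcal{M}(F))$ follows. The main subtlety is the second direction of the equivalence: one must use the sum constraint $\sum v(i) = 0$ to guarantee that $v$ has entries of both signs, which is what allows the boundary-hitting argument to produce a strictly smaller-support scaling; without the normalization in Theorem \ref{scale}, this step would fail and the polytope structure would collapse to a cone.
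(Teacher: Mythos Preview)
The paper does not supply its own proof of this theorem; it is quoted from \cite{CC13} and stated without argument. Your proposal is a correct, self-contained proof and follows the standard route: realize $\mathcal{C}(F)$ as a bounded polyhedron via the linear description in Theorem~\ref{scale}, then show that its extreme points coincide with $\mathcal{M}(F)$.

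Both directions of the vertex--minimal equivalence are handled properly. In the ``not minimal $\Rightarrow$ not extreme'' direction, the perturbation $c \pm \varepsilon v$ with $v = c - d$ stays feasible because $v$ is supported inside $\operatorname{supp}(c)$ and $c$ is strictly positive there. In the converse, the point you single out at the end is indeed the essential one: the normalization $\sum_i v(i) = 0$ forces $v = c_1 - c_2$ to have entries of both signs, which is what makes the ray $c_1 + s v$ (or equivalently $c + s v$) hit the relative boundary at a point of strictly smaller support. One cosmetic remark: you wrote ``sliding $c_1 + sv$''; this is fine since $c_1$ has full support on $\operatorname{supp}(c)$, though sliding from $c$ itself would read slightly more naturally.
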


From (\ref{Parseval}) the polytope 
$$ \mathcal{C}(F) 
 = \Set{  \gp{c(1),\ldots, c(k)} \in \R^k_{\ge 0} \,: \, \sum_{i=1}^kc(i)f_if_i^*=I_n }. $$
 This is called the \emph{scalability polytope} of $F$. 
 
 \begin{theorem}\label{minChar}
 Let $F$ be a scalable frame for $\R^n$ and let $v \in \mathcal{C}(F)$. Then 
 $$  |supp(v)| \le \frac{n(n+1)}{2} \text{ if and only if } v \in \mathcal{M}(F).$$
 \end{theorem}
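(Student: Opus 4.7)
The plan is to translate minimality of $v$ into a linear-algebraic condition on the matrices $\{f_if_i^*\}_{i\in supp(v)}$ viewed inside the space $\mathrm{Sym}(n)$ of $n\times n$ symmetric matrices, and then read off the support bound from $\dim\mathrm{Sym}(n) = n(n+1)/2$. Throughout I work with the linear map $L : \R^k \to \mathrm{Sym}(n)$, $L(c) = \sum_{i} c(i) f_if_i^*$, so that $\mathcal{C}(F) = \{c\in\R^k_{\ge 0} : L(c) = I_n\}$, and I invoke Theorem \ref{spolytope} to identify $\mathcal{M}(F)$ with the vertex set of this polytope.

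First I prove the easier implication $v\in\mathcal{M}(F) \Rightarrow |supp(v)| \le n(n+1)/2$. By Theorem \ref{spolytope}, $v$ is a vertex of $\{c\ge 0 : L(c) = I_n\}$. The standard polyhedral characterization of vertices says this is equivalent to linear independence of the constraint-matrix columns indexed by $supp(v)$; these columns are exactly the matrices $f_if_i^*\in\mathrm{Sym}(n)$, so $\{f_if_i^*\}_{i\in supp(v)}$ is independent inside an $n(n+1)/2$-dimensional space, yielding $|supp(v)|\le n(n+1)/2$.

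For the converse I argue by contrapositive. If $v$ is not minimal, then by definition the subframe $\{f_i : v(i)>0\}$ contains a proper scalable subframe, giving a scaling $d\in\mathcal{C}(F)$ with $supp(d)\subsetneq supp(v)$. Setting $y := v-d$ yields a nonzero $y\in\ker L$ with $supp(y)\subseteq supp(v)$. Taking the trace of $L(y)=0$ forces $\sum_i y(i)\|f_i\|^2 = 0$, so $y$ has coordinates of both signs, and the family $v+ty$ stays inside $\mathcal{C}(F)$ on a nondegenerate interval of $t$ around $0$. Pushing $t$ until a coordinate of $v$ hits zero produces a scaling of strictly smaller support, and iterating yields a minimal scaling $m$ with $supp(m)\subsetneq supp(v)$; the nonzero vector $v-m$ is then a linear dependence among $\{f_if_i^*\}_{i\in supp(v)}$.

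The main obstacle is the last step: converting this linear dependence into the quantitative bound $|supp(v)| > n(n+1)/2$. Linear dependence alone does not force the inequality in general, so one must exploit that $v$ is a scaling---so $I_n$ lies in the relative interior of the conic hull of $\{f_if_i^*\}_{i\in supp(v)}$---and combine this with a Caratheodory-style dimension count inside $\mathrm{Sym}(n)$ to sharpen the conclusion. Putting this dimension count together with the vertex criterion established in the easy direction is what closes out both implications of the theorem.
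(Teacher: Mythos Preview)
Your argument for $v\in\mathcal{M}(F)\Rightarrow |supp(v)|\le n(n+1)/2$ is essentially the paper's: both pass through linear independence of $\{f_if_i^*\}_{i\in supp(v)}$ (the paper cites Theorem~3.5 of \cite{CC13}; you obtain it from the vertex criterion for polyhedra) and then invoke $\dim\mathrm{Sym}(n)=n(n+1)/2$.

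The converse is where your proposal does not close. You correctly show that non-minimality of $v$ produces a nontrivial linear dependence among $\{f_if_i^*\}_{i\in supp(v)}$, and you correctly flag that this alone does not force $|supp(v)|>n(n+1)/2$: a dependent family can have fewer than $\dim\mathrm{Sym}(n)$ members. Your final paragraph is not a proof but a sketch of hope---a ``Carath\'eodory-style dimension count'' yields \emph{upper} bounds on the support of some representation, not a lower bound on $|supp(v)|$, and ``$I_n$ lies in the relative interior of the conic hull'' is just a restatement of $v$ being a scaling with the given support, not a new ingredient. Indeed, no such argument can work from the bare setup: take $f_1=f_2=e_1$, $f_3=e_2$ in $\R^2$; then $v=(\tfrac12,\tfrac12,1)$ is a non-minimal scaling with $|supp(v)|=3=n(n+1)/2$.

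The paper does not go by contrapositive. It argues the implication directly: assuming $|supp(v)|\le n(n+1)/2$, it invokes Corollary~2.2 of \cite{CC13} to conclude that $\{f_if_i^*:i\in supp(v)\}$ is linearly independent. Once that is available, the system $\sum_{i\in supp(v)}c(i)f_if_i^*=I_n$ has the unique solution $c=v$, so $\{f_i:i\in supp(v)\}$ admits no proper scalable subframe and $v\in\mathcal{M}(F)$. All of the substantive content in this direction sits in the cited result from \cite{CC13}; the hypotheses ruling out examples like the one above are carried by that corollary, not by anything internal to the present argument. What your attempt is missing is precisely this external input (or an independent proof of it).
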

 
 \begin{proof}
 If $  |supp(v)| \le \frac{n(n+1)}{2} $, then by Corollary 2.2 in \cite{CC13}, 
 $$\Set{f_i f_i^*\,:\, i \in supp(v)}$$ is linearly independent. 
 That is, $\Set{f_i \,:\, i \in supp(v)}$ is scalable
 with the unique scaling $v$ which implies that $v \in \mathcal{M}(F)$.
 
If $v \in \mathcal{M}(F)$, then $\Set{f_i f_i^*\,:\, i \in supp(v)}$ is linearly independent  by Theorem 3.5 in \cite{CC13}. 
Since the dimension of $n\times n$ real symmetric matrices is $\frac{n(n+1)}{2}$, we conclude that 
$  |supp(v)| \le \frac{n(n+1)}{2} $.
 \end{proof}

We now turn our attention to the linear system to find all minimal scalings of a given scalable frame. 
 This linear system provide us a method to find  a subset of the set 
of contact points for John's decomposition of the identity and an estimate for   
 the size  $\ab{\mathcal{M}(F)}$ of minimal scalings.  
 In the following,  we provide an estimation of the number of minimal scalings of a scalable frame using the Gramian associated to the diagram vectors of the frame vectors. 
 Let $F=\{f_i\}_{i=1}^k$ be a unit-norm frame in $\R^n$.  Let $\tilde{G} := \gp{ \inpro{\tilde{f_j},\tilde{ f_i}} }_{i, j=1}^k$ be the Gramian associated to the diagram vectors $\Set{\tilde{f}_i}_{i=1}^k$. 
From Theorem \ref{scale},  we have a second description of  $\mathcal{C}(F)$:
$$ \mathcal{C}(F) 
 = \Set{  \gp{c(1),\ldots, c(k)} \in \R^k_{\ge 0} \,: \, \sum_{i=1}^kc(i)f_if_i^*=I_n } $$
 $$ = \Set{  \gp{x(1),\ldots, x(k)} \in \R^k_{\ge 0} \,:\,  
 \begin{cases}
 \tilde{G} x =0\cr
x(1) + \ldots + x(k) =n
 \end{cases}  } $$

The second characterization of the set of scalings is obtained by solving a linear system, which allows us to adopt a relatively fast algorithm to find the set of minimal scalings \cite{AK04, CC13}.  
 Specifically, by applying the b-rule algorithm (a modification of the simplex  algorithm to find solutions in $\R^k_{\ge 0}$) \cite{AK04} to the linear system
\begin{equation}
 \label{brule}
 \begin{cases}
 \tilde{G} x =0\cr
x(1) + \ldots + x(k) =n
 \end{cases},
\end{equation}
we obtain the set of  minimal scalings  $\mathcal{M}(F)$. 
 \begin{theorem}
  Let $F=\{f_i\}_{i=1}^k$ be a unit-norm frame in $\R^n$ and let $\tilde{G}$ be the Gramian associated to the diagram vectors $\Set{\tilde{f}_i}_{i=1}^k$.  Then we have 
  \begin{equation}\label{Mbound}
   \ab{\mathcal{M}(F)} \le  \gp{ \begin{matrix}
 k \\ rank (\tilde{G}) + 1
 \end{matrix}}.
 \end{equation}
\end{theorem}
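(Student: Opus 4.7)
The plan is to realize $\mathcal{C}(F)$ as the feasible region of a linear system in standard form and apply the classical linear programming bound on the number of basic feasible solutions. By Theorem~\ref{scale} together with Theorem~\ref{spolytope}, the minimal scalings $\mathcal{M}(F)$ are exactly the vertices of the polytope
$\mathcal{C}(F) = \Set{x \in \R^k_{\ge 0} : A x = b}$,
where $A$ is the $(k+1)\times k$ matrix obtained by appending the row $\mathbf{1}^T$ to $\tilde{G}$, and $b = (0,\ldots,0,n)^T \in \R^{k+1}$.

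The first step is to compute $rank(A)$. I can assume that $F$ is scalable, since otherwise $|\mathcal{M}(F)|=0$ and the bound is trivial. Then there exists $c\in\R^k_{\ge 0}$ with $\tilde{G}c=0$ and $\mathbf{1}^T c = n >0$. Such a $c$ lies in $\ker\tilde{G}$ but is not annihilated by $\mathbf{1}^T$, so $\mathbf{1}$ does not lie in $(\ker\tilde{G})^{\perp}$, which equals the row space of $\tilde{G}$. Hence $\mathbf{1}^T$ is linearly independent of the rows of $\tilde{G}$, giving $rank(A) = rank(\tilde{G}) + 1$.

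To finish, I would invoke the classical fact from linear programming that every vertex of $\Set{x \in \R^k_{\ge 0} : A x = b}$ is a basic feasible solution whose support indexes a linearly independent subset of columns of $A$. Any such subset extends to a collection of $rank(A)$ linearly independent columns (a basis), and each basis determines at most one vertex, namely the unique solution of the corresponding square nonsingular subsystem (provided that solution is nonnegative). Therefore the number of vertices of $\mathcal{C}(F)$, and hence $|\mathcal{M}(F)|$, is bounded by the number of $rank(A)$-element subsets of $\{1,\ldots,k\}$, which is $\binom{k}{rank(\tilde{G})+1}$. The main obstacle is really just the rank computation; once that is in place, the bound is immediate from classical polyhedral combinatorics, and the estimate in \eqref{Mbound} follows.
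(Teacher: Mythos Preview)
Your argument is correct and follows essentially the same route as the paper: both identify $\mathcal{C}(F)$ with the feasible region of the linear system~\eqref{brule} and bound the number of vertices by the number of choices of $rank(\tilde{G})+1$ columns, the paper phrasing this via the b-rule (simplex) algorithm and you via the classical basic-feasible-solution characterization. Your treatment is in fact slightly more careful, since you explicitly verify that appending $\mathbf{1}^T$ raises the rank by one (using that a scaling $c$ lies in $\ker\tilde{G}$ but satisfies $\mathbf{1}^T c=n\neq 0$), a point the paper simply asserts.
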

\begin{proof}
Note that the system of equations (\ref{brule}) can be reduced to a system of $rank(\tilde{G}) + 1$ equations. When the b-rule algorithm is applied to  
  $\gp{ \begin{matrix}
 k \\ rank (\tilde{G}) + 1
 \end{matrix}}$ systems of equations to find the minimal scalings, it follows that 
$$ \ab{\mathcal{M}(F)} \le  \gp{ \begin{matrix}
 k \\ rank (\tilde{G}) + 1
 \end{matrix}}.$$
\end{proof}

We note that when $F$ is an orthonormal basis,  we obtain  the equality in (\ref{Mbound}).

The following is a well-known characterization of the unique ellipsoid of maximum volume in a convex body in $\R^n$, called the John's ellipsoid theorem.
\begin{theorem}[\cite{GS05}]
\label{john}
Let $E \subset \R^n$ be compact, convex, symmetric in the origin $0$, and with $B^n \subset E$. 
Then the following claims are equivalent:
\begin{enumerate}[(i)]
\item $B^n$ is the unique ellipsoid of maximum volume in $E$.
\item There are $f_i \in B^n \cap bd (E)$ and $c_i >0$, i =1, \ldots, k, where 
$ n \le k \le \frac{1}{2}n (n+1)$, such that 
\begin{equation}
\label{JI}
 I_n = \sum_{i=1}^k c_i f_i \otimes f_i.
\end{equation}
\end{enumerate}
Here, $B^n$ is the solid unit ball in $\R^n$ and $bd(E)$ stands for the boundary of $E$. 
\end{theorem}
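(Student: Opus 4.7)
The plan is to prove the two implications separately: (ii) implies (i) is a direct volume comparison using the rank-one decomposition, while (i) implies (ii) is a separation argument in $\text{Sym}(n)$ followed by Carathéodory's theorem. Starting with (ii) implies (i), I would first reduce to centered ellipsoids. For any $E' \subset E$, the ellipsoid $E'' = \tfrac{1}{2}(E' + (-E'))$ is centered at the origin, lies in $E$ by central symmetry and convexity, and has volume at least $\text{vol}(E')$ by Brunn--Minkowski, with equality forcing $E'$ itself centered. So I assume $E' = T(B^n)$ for a positive definite symmetric $T$. Each $f_i \in B^n \cap bd(E)$ satisfies $\|f_i\| = 1$ (else $f_i \in \text{int}(E)$), so the tangent hyperplane to $B^n$ at $f_i$ also supports $E$, giving $\|T f_i\| \le 1$. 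Tracing $I_n = \sum c_i f_i \otimes f_i$ gives $\sum c_i = n$, whence
\[
  \text{tr}(T^2) = \sum c_i \|T f_i\|^2 \le n.
\]
AM-GM on the eigenvalues of $T^2$ yields $\det(T)^{2/n} \le \text{tr}(T^2)/n \le 1$, so $\text{vol}(E') \le \text{vol}(B^n)$; strict AM-GM forces $T = I$ at equality.

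For (i) implies (ii), let $C = B^n \cap bd(E)$ be the compact, centrally symmetric contact set, which lies on the unit sphere. The key claim is that $I_n$ lies in the convex conic hull $K = \text{cone}\{f \otimes f : f \in C\}$ inside the $\tfrac{n(n+1)}{2}$-dimensional space $\text{Sym}(n)$. Suppose not; since $K$ is closed (by compactness of $C$), Hahn--Banach gives a symmetric $H$ with $\langle Hf, f\rangle \le 0$ for all $f \in C$ and $\text{tr}(H) > 0$. Replacing $H$ by $H - \delta I$ for small $\delta > 0$ preserves $\text{tr}(H) > 0$ while strengthening the inequality to $\langle Hf, f\rangle < 0$ on $C$. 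Consider $E_\epsilon = \{x : \langle(I - \epsilon H)x, x\rangle \le 1\}$, whose volume is
\[
  \det(I - \epsilon H)^{-1/2}\,\text{vol}(B^n) = \gp{1 + \tfrac{\epsilon}{2}\text{tr}(H) + O(\epsilon^2)}\text{vol}(B^n),
\]
which exceeds $\text{vol}(B^n)$ for small $\epsilon > 0$. A compactness argument shows $E_\epsilon \subset E$: away from $C$ the gap between $bd(B^n)$ and $bd(E)$ is uniformly positive, absorbing the $O(\epsilon)$ perturbation; near $f \in C$ the support function of $E_\epsilon$ in direction $f$ equals $1 + \tfrac{\epsilon}{2}\langle Hf, f\rangle + O(\epsilon^2) < 1$, keeping $E_\epsilon$ under the tangent hyperplane to $E$ at $f$. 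This contradicts maximality, so $I_n \in K$, yielding $I_n = \sum c_i f_i \otimes f_i$ with $f_i \in C$ and $c_i > 0$.

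The bounds on $k$ follow quickly: Carathéodory's theorem for conic hulls in the $\tfrac{n(n+1)}{2}$-dimensional space $\text{Sym}(n)$ gives $k \le \tfrac{n(n+1)}{2}$, while $k \ge n$ is forced by $n = \text{rank}(I_n) \le \text{rank}\gp{\sum c_i f_i f_i^\top} \le k$. The main obstacle I expect is the containment $E_\epsilon \subset E$ in the separation step: one must uniformly control a first-order perturbation against two competing constraints --- strict interiority of $B^n$ away from $C$ versus strict first-order gain at each contact point --- via a quantitative compactness argument that splits $bd(E)$ into a small neighborhood of $C$ (where the separating inequality provides a genuine first-order improvement) and its complement (where the uniform gap absorbs the perturbation).
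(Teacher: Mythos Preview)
The paper does not prove this theorem. Theorem~\ref{john} is quoted from \cite{GS05} (Gruber--Schuster) and stated without proof; the paper immediately proceeds to use it. There is therefore no in-paper argument to compare your proposal against.

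For what it is worth, your outline is the standard route to John's theorem (essentially Ball's presentation of John's original separation argument): the (ii)$\Rightarrow$(i) direction via the trace inequality $\text{tr}(T^2)=\sum c_i\|Tf_i\|^2\le n$ together with AM--GM is correct as written, and the (i)$\Rightarrow$(ii) direction via separating $I_n$ from the closed cone $\text{cone}\{f\otimes f:f\in C\}$ in $\text{Sym}(n)$, then perturbing $B^n$ along the separating functional, is the right strategy. Your Carath\'eodory and rank arguments for the bounds on $k$ are fine. Two small remarks: in the reduction to centered ellipsoids, for an ellipsoid $E'=c+L$ one has $\tfrac12(E'+(-E'))=L$ exactly, so no Brunn--Minkowski is needed and the volume is preserved on the nose; and you correctly flag that the containment $E_\epsilon\subset E$ requires a genuine compactness-and-splitting argument, which is indeed where the work lies but is routine once set up carefully. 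The cited source \cite{GS05} advertises an ``arithmetic'' proof, which is organized somewhat differently from the separation approach you sketch, but since the present paper does not reproduce it there is nothing further to compare.
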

We call Equation (\ref{JI})  as the John's decomposition of the identity  and the elements of $B^n \cap bd (E)$ as the contact points. 
The relation between the measure of scalable frames and John's ellipsoid theorem is studied in \cite{CKOPW15}. 
 Some subsets of the set of contact points can be useful in understanding the orthogonal structure under action of a given linear operator \cite{Ver01}. In the following, we study the connection between a minimal scaling of a scalable frame and  subsets of the set of contact points for the John's decomposition of the identity.
The relation of a scalable frame and John's ellipsoid theorem are obtained by rewriting  (\ref{JI}) as the following equation:
$$  f = \sum_{i=1}^k  \inpro{f, \sqrt{c_i}f_i}\sqrt{c_i}f_i, \text{ for any } f \in \R^n. $$
This allows us to consider the subset of contact points in  (\ref{JI}) as a frame in $\R^n$. 
If $F = bd(E) \cap B^n $ is finite, using the system of equations (\ref{brule}) together with Theorem \ref{minChar}, we obtain all subsets of the set of the contact points for the John's decomposition of the identity since the b-rule algorithm finds
all entry-wise nonnegative vectors that are solutions to (\ref{brule}). 
This is stated in the following theorem. 
\begin{theorem}
Let $E \subset \R^n$ be compact, convex, symmetric in the origin $0$. 
Let $F = bd(E) \cap B^n $ be a finite set of contact points. 
 If $B^n$ is the unique ellipsoid of maximum volume in $E$, then 
the  frame vectors corresponding to any minimal scaling of $F$  is a subset of the set of  contact points in  John's decomposition of the identity.
\end{theorem}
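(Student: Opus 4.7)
The plan is to show that for every minimal scaling $c \in \mathcal{M}(F)$, the subset $S := supp(c) = \{i : c(i) > 0\}$ produces exactly the data described in Theorem~\ref{john}(ii): a collection of contact points $\{f_i\}_{i \in S} \subset B^n \cap bd(E)$ together with positive weights $\{c(i)\}_{i \in S}$ such that
\[ I_n \;=\; \sum_{i \in S} c(i)\, f_i \otimes f_i \quad\text{and}\quad n \le |S| \le \frac{n(n+1)}{2}. \]
This is precisely what it means for the frame vectors indexed by $S$ to constitute a valid set of contact points in a John's decomposition of the identity.

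Before extracting a minimal scaling, I would first observe that $F$ itself is a unit-norm scalable frame, so that $\mathcal{M}(F)$ is nonempty. Each $f \in B^n \cap bd(E)$ has norm exactly $1$: a strict inequality $\|f\| < 1$ would place $f$ in the interior of $B^n$, and hence in the interior of $E$ since $B^n \subset E$, contradicting $f \in bd(E)$. Scalability then follows from the existence direction of Theorem~\ref{john}(ii): the John coefficients $c_i > 0$, extended by zero on the remaining elements of $F$, yield a vector in $\R_{\ge 0}^{k}$ satisfying $\sum_{i=1}^{k} c(i) f_i f_i^t = I_n$, which by (\ref{Parseval}) is a scaling in the sense of this paper.

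Now fix an arbitrary $c \in \mathcal{M}(F)$. That each $f_i$ with $i \in S$ is a contact point is immediate from $F \subset B^n \cap bd(E)$. The identity $I_n = \sum_{i \in S} c(i) f_i \otimes f_i$ is just (\ref{Parseval}) applied to the Parseval frame $\{\sqrt{c(i)} f_i\}_{i=1}^{k}$, with the zero-weighted terms dropped. For the cardinality bounds, the upper bound $|S| \le \frac{n(n+1)}{2}$ is exactly Theorem~\ref{minChar} applied to $v = c$, and the lower bound $|S| \ge n$ follows because $\{\sqrt{c(i)} f_i\}_{i \in S}$ is a Parseval frame for $\R^n$ and therefore must span $\R^n$.

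The proof reduces to a direct translation between the language of minimal scalings and the language of John's decompositions, with no technically hard step. The only conceptual point is recognizing that Theorem~\ref{minChar} supplies precisely the upper cardinality bound demanded by the John framework, while the matching lower bound is just the spanning property of a Parseval frame.
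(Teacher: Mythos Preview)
Your argument is correct and matches the paper's approach: the paper does not give a formal proof but simply remarks that the theorem follows from the characterization of scalings via the linear system (\ref{brule}) together with Theorem~\ref{minChar}, and your write-up is exactly a fleshed-out version of that sketch, using Theorem~\ref{minChar} for the upper cardinality bound and the Parseval/spanning property for the lower bound. The only mild addition you supply beyond the paper is the explicit verification that $F$ is unit-norm and scalable (so that $\mathcal{M}(F)\neq\emptyset$), which the paper leaves implicit.
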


As an example, in $\R^2$, let us consider the following  contact points  
$$F =\bk{
 \begin{matrix}
\cos 10  &  -\frac{1}{2} &  -\frac{1}{2}  & -\cos 10  &  \frac{1}{2} &  \frac{1}{2} \cr
\sin 10  &  \frac{\sqrt{3}}{2} &  -\frac{\sqrt{3}}{2} & -\sin 10  & - \frac{\sqrt{3}}{2} &  \frac{\sqrt{3}}{2}  \cr 
  \end{matrix}}. 
  $$

 \begin{figure}
\centering
\begin{tikzpicture}[smooth,scale=2]
\draw[->] (-1.5,0) -- (1.5,0) ; 
\draw[->] (0,-1.5) -- (0,1.5) ; 

\draw (0, 0) circle (1);

\draw [fill] (0.9848,0.173648) circle [radius = .05] node [right]{$f_1$};
\draw[ultra thick, domain=.9:1.11] 
        plot(\x, {-0.9848/0.173648*\x + 0.173648 +0.9848*0.9848/0.173648 }) ;

\draw [fill] (-0.9848, -0.173648) circle [radius = .05] node [above right]{$f_4$};
\draw[ultra thick, domain=-1.11:-0.9] 
        plot(\x, {-0.9848/0.173648*\x - 0.173648 -0.9848*0.9848/0.173648 } );

\draw [fill] (0.5, 0.866) circle [radius = .05] node [above right]{$f_6$};
\draw[ultra thick, domain=0:.9] 
        plot(\x,  {-0.5/0.866*\x +0.866 + 0.5*0.5/0.866 } );

\draw [fill] (-0.5, -0.866) circle [radius = .05]node [above right]{$f_3$};
\draw[ultra thick, domain=-.9:0] 
        plot(\x, {-0.5/0.866*\x -0.866  -0.5*0.5/0.866 } );

\draw [fill] (-0.5, 0.866) circle [radius = .05]node [above left]{$f_2$};
\draw[ultra thick, domain=-1.1: 0] 
        plot(\x, {0.5/0.866*\x +0.866  +0.5*0.5/0.866 }) ;

\draw [fill] (0.5, -0.866) circle [radius = .05]node [below right]{$f_5$};
\draw[ultra thick, domain=0:1.1] 
        plot(\x, {0.5/0.866*\x -0.866  -0.5*0.5/0.866 });

\end{tikzpicture} 
\caption{Convex body with a set of  contact points}

 \end{figure}
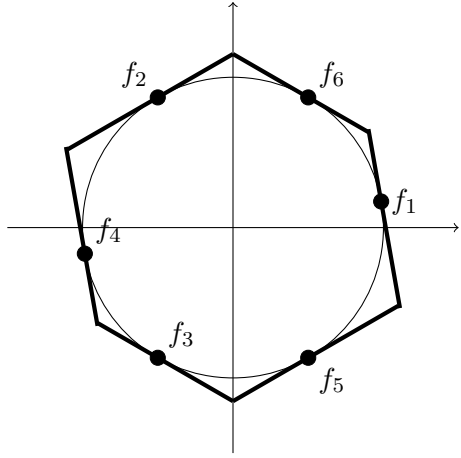
  Then 
 $$ f_1 =  \bk{
 \begin{matrix}
\cos 10  \cr   \sin 10  \cr 
  \end{matrix}}, \quad 
  f_2 =  \bk{
 \begin{matrix}
 -\frac{1}{2}  \cr   \frac{\sqrt{3}}{2}  \cr 
  \end{matrix}}, \quad 
  f_3 =  \bk{
 \begin{matrix}
-\frac{1}{2}  \cr   -\frac{\sqrt{3}}{2}  \cr 
  \end{matrix}}, 
     $$
  together with 
    $$
  c_1=\frac{2}{3 \cos^210 - \sin^2 10}, $$
  $$
  c_2 = \frac{2 \sqrt{3}}{3} \frac{\sqrt{3}\cos^210 - \sqrt{3}\sin^210 + 2 \cos10 \sin10}{3 \cos^210 - \sin^2 10}, $$
 $$ c_3 = \frac{2 \sqrt{3}}{3} \frac{\sqrt{3}\cos^210 - \sqrt{3}\sin^210  - 2 \cos10 \sin10}{3 \cos^210 - \sin^2 10}
 \vspace{.1in}
   $$
satisfy the second statement of Theorem \ref{john}. 
We note that $ \ab{\mathcal{M}(F)} =16$ and all of the minimal scalings satisfy  John's decomposition of the identity.

\section{Structural properties of scalable frames}
In  subsection \S4.1,  we study some properties of general polytopes, which provide a characterization of affine  dependency of  minimal scalings in subsection \S 4.2. 
We show that if minimal scalings are affinely independent, all strict scalings of a frame  have the same structural property.  
That is, the collections of all tight subframes of strictly scaled frames are the same up to a permutation of the frame elements.

\subsection{General polytopes}

\begin{proposition}
\label{5_10}
  Let $\{v_i\}_{i \in I}$ be the set of vertices for a polytope. Then  $\{v_i\}_{i \in I}$ is affinely dependent if and only if 
  $$  (conv\{v_j\}_{j \in J_1})^\circ \cap (conv\{v_j\}_{j \in J_2})^\circ \neq \emptyset $$
   for some disjoint subsets $J_1,J_2 \subseteq I.$
\end{proposition}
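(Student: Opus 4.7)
The plan is a direct translation of affine dependence into a "common point of two relative interiors" statement via a sign-split of the coefficients, and then reversing the process.

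For the forward direction, I would start from an affine dependence $\sum_{i\in I}\alpha_i v_i = 0$ with $\sum_{i\in I}\alpha_i = 0$ and not all $\alpha_i$ zero. I would then partition the support of $\alpha$ into $J_1 = \{i:\alpha_i > 0\}$ and $J_2 = \{i:\alpha_i < 0\}$. Because $\sum \alpha_i = 0$ and the $\alpha_i$ are not all zero, both $J_1$ and $J_2$ are nonempty and disjoint, with $S := \sum_{i\in J_1}\alpha_i = -\sum_{i\in J_2}\alpha_i > 0$. Dividing the relation $\sum_{i\in J_1}\alpha_i v_i = \sum_{i\in J_2}(-\alpha_i)v_i$ by $S$ exhibits a common point
\[
p = \sum_{i\in J_1}\frac{\alpha_i}{S}v_i = \sum_{i\in J_2}\frac{-\alpha_i}{S}v_i
\]
that lies in $(conv\{v_j\}_{j\in J_1})^\circ \cap (conv\{v_j\}_{j\in J_2})^\circ$ by the definition in \eqref{eq_1}, since all coefficients are strictly positive and sum to $1$ on each side.

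For the converse, I would take a point $p \in (conv\{v_j\}_{j\in J_1})^\circ \cap (conv\{v_j\}_{j\in J_2})^\circ$ with $J_1,J_2$ disjoint and write $p = \sum_{j\in J_1}\alpha_j v_j = \sum_{j\in J_2}\beta_j v_j$ with $\alpha_j,\beta_j > 0$ and $\sum_{j\in J_1}\alpha_j = \sum_{j\in J_2}\beta_j = 1$. Setting $\gamma_j = \alpha_j$ for $j\in J_1$, $\gamma_j = -\beta_j$ for $j\in J_2$, and $\gamma_j = 0$ otherwise yields $\sum_{i\in I}\gamma_i v_i = 0$ and $\sum_{i\in I}\gamma_i = 0$; disjointness of $J_1$ and $J_2$ prevents any cancellation, so the $\gamma_i$ are not all zero, giving the required affine dependence.

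I do not expect a serious obstacle here: the entire argument is a manipulation of convex coefficients, and disjointness of $J_1,J_2$ is exactly what keeps the two directions honest (in one direction it isolates the two signs of $\alpha_i$; in the other it guarantees the combined coefficient vector is nontrivial). The only minor point worth being careful about is that the proposition is stated for the vertex set of a polytope but the argument never uses that the $v_i$ are vertices — it is a general fact about any finite affinely spanning or affinely dependent set — so I would simply phrase the proof for a general finite family $\{v_i\}_{i\in I}$.
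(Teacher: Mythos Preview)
Your proof is correct, and your converse direction is essentially identical to the paper's. The forward direction, however, differs in a meaningful way. The paper does not start from the symmetric relation $\sum_i \alpha_i v_i = 0$, $\sum_i \alpha_i = 0$; instead it picks some $v_{i_0}$ that lies in the affine hull of the others, writes $v_{i_0} = \sum_{j\in J_1}\alpha_j v_j + \sum_{j\in J_2}\alpha_j v_j$ with positive coefficients on $J_1$ and negative on $J_2$, and then must argue that $J_2\neq\emptyset$ --- which it does by invoking that $v_{i_0}$ is a \emph{vertex} of the polytope (otherwise $v_{i_0}$ would be a convex combination of other vertices). The two disjoint sets in the paper are then $J_1$ and $J_2\cup\{i_0\}$. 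Your direct sign-split of the symmetric affine relation forces both $J_1$ and $J_2$ to be nonempty purely from $\sum_i\alpha_i=0$, so the vertex hypothesis is never used; this confirms your observation that the statement holds for an arbitrary finite family, not just a vertex set. In short: your route is slightly cleaner and more general, while the paper's route leans on the extra polytope structure to compensate for starting from an asymmetric form of affine dependence.
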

\begin{proof}
  ($\Leftarrow$)
  Let $\sum_{j \in J_1}\alpha_jv_j=\sum_{j \in J_2}\alpha_jv_j$,  where  $\alpha_j>0$,  
  $ \sum_{j \in J_1}\alpha_j = \sum_{j \in J_2}\alpha_j = 1$. 
  Then  $\sum_{j \in J_1}\alpha_jv_j -\sum_{j \in J_2}\alpha_jv_j =0$ and 
  $ \sum_{j \in J_1}\alpha_j - \sum_{j \in J_2}\alpha_j = 0$.  
  We conclude  $\{v_i\}_{i \in J_1 \cup J_2}$ is affinely dependent and hence $\{v_i\}_{i \in I}$ is affinely dependent. 

  ($\Rightarrow$) Since $\{v_i\}_{i \in I}$ is affinely dependent, there exists $i \in I$ such that $v_i \in \mbox{aff}\{v_j\}_{j \in I \setminus\{i\}}$. 
We write $v_i= \sum_{j \in J_1}\alpha_jv_j + \sum_{j \in J_2}\alpha_jv_j$, where   $\alpha_j$ is positive for $j \in J_1$, negative for $j \in J_2$, and $J_1 \cupdot J_2 \subseteq I \setminus \Set{i}$.  
Since $ \sum_{j \in J_1 \bigcupdot J_2 } \alpha_j =1 $, $J_1 \neq \emptyset$.  
Since $v_i$ is a vertex of the polytope, $J_2 \neq \emptyset$.   
Let $ r =   \sum_{j \in J_1  } \alpha_j$, then 
$$  w = \frac{1}{r}  \sum_{j \in J_1  } \alpha_j  v_j \in (conv\{v_j\}_{j \in J_1})^\circ  \text{ and} $$
$$ w = \frac{1}{r}  \left( v_i  + \sum_{j \in J_2  } (-\alpha_j)  v_j \right)  \in (conv\{v_j\}_{j \in J_2 \cup \{i\}})^\circ. $$
This completes the proof. 
\end{proof}

\begin{proposition}
\label{5_11}
  Let $\{v_i\}_{i \in I}$ be the set of vertices for a polytope and let  $conv\{v_j\}_{j \in J}$ be a nontrivial face. 
 If $\sum_{i \in I} \alpha_iv_i \in conv\{v_j\}_{j \in J}$, then 
 $\alpha_i=0$ for $i \in I \setminus J$. 
 
\end{proposition}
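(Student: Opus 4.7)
The plan is to use the supporting hyperplane characterization of a face. Since $conv\{v_j\}_{j \in J}$ is a nontrivial face of the polytope $P = conv\{v_i\}_{i \in I}$, there exists a linear functional $\ell$ on $\R^k$ and a constant $c$ such that $\ell(x) \leq c$ for all $x \in P$, with equality precisely on the face. In particular, $\ell(v_j) = c$ for every $j \in J$ and $\ell(v_i) < c$ strictly for every $i \in I \setminus J$. This is where I make essential use of the fact that $\{v_j\}_{j \in J}$ are exactly the vertices of the polytope that lie in the face (a standard fact in polytope theory, since a face of a polytope is itself a polytope whose vertices form a subset of the ambient vertex set).

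Next, interpreting the statement in the way consistent with the preceding Proposition \ref{5_10} and the convex-combination framework of the paper, I assume the coefficients satisfy $\alpha_i \geq 0$ and $\sum_{i \in I} \alpha_i = 1$, so $p := \sum_{i \in I} \alpha_i v_i$ is a genuine convex combination. Applying $\ell$ and using linearity,
\begin{equation*}
\ell(p) = \sum_{i \in I} \alpha_i \, \ell(v_i) = c\sum_{i \in J} \alpha_i + \sum_{i \in I \setminus J} \alpha_i\, \ell(v_i).
\end{equation*}
Since $p \in conv\{v_j\}_{j \in J}$, we also have $\ell(p) = c$. Substituting $\sum_{i \in J}\alpha_i = 1 - \sum_{i \in I \setminus J}\alpha_i$ and rearranging gives
\begin{equation*}
0 = \sum_{i \in I \setminus J} \alpha_i \bigl(\ell(v_i) - c\bigr).
\end{equation*}

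In this final sum every coefficient $\alpha_i$ is nonnegative and every factor $\ell(v_i) - c$ is strictly negative, so each term is nonpositive; vanishing of the total sum forces $\alpha_i = 0$ for every $i \in I \setminus J$, which is the desired conclusion. The main obstacle I foresee is not the argument itself but the interpretation of the hypothesis: the statement does not explicitly quantify the $\alpha_i$, so I would either flag the implicit convexity assumption $\alpha_i \geq 0$, $\sum \alpha_i = 1$ up front, or else observe that without such a constraint the conclusion is visibly false (e.g.\ one may rewrite any vertex in many ways using affine but not convex combinations of the others when the vertices are affinely dependent, as in Proposition \ref{5_10}). Once the convexity hypothesis is in place, the supporting-hyperplane step is the entire proof.
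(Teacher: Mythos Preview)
Your proof is correct and follows essentially the same approach as the paper: both use the supporting hyperplane of the face to obtain a linear functional that is maximized exactly on $\{v_j\}_{j\in J}$, then compare values. Your treatment is in fact a bit more careful than the paper's, since you make the convex-combination hypothesis explicit (the paper's proof also tacitly relies on $\alpha_i\ge 0$ and $\sum_i\alpha_i=1$ when it asserts the strict inequality $\langle c,\sum_{i\in I\setminus J}\alpha_i v_i\rangle < b$), and your observation that the conclusion fails for general affine combinations is a valid caveat.
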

\begin{proof}
Let $\mathcal{H}=\{x \in \R^k : \inpro{c, x} = b, \  b \in \R, \ c \in  \R^k\setminus \{0\} \}$ be the supporting hyperplane containing $conv\{v_j\}_{j \in J}$. 
We write $\sum_{i \in I} \alpha_iv_i = \sum_{i \in J} \alpha_iv_i + \sum_{i \in I\backslash J} \alpha_iv_i$.  
Suppose that $\alpha_{i_0} \neq 0$ for some $ i_0 \in I \backslash J$. 
Since $ \sum_{i \in J} \alpha_iv_i  \in \mathcal{H}$ and $ \sum_{i \in I \backslash J} \alpha_iv_i  \notin \mathcal{H}$  , 
$$ \inpro{c,  \left(   \sum_{i \in J} \alpha_iv_i \right) }= b \text{ and }
\inpro{ c ,   \left(   \sum_{i \in I \backslash J} \alpha_iv_i \right) } <b. $$
This implies that $ \sum_{i \in I} \alpha_iv_i  \notin \mathcal{H}$. This completes the proof.
\end{proof}

\begin{corollary}
\label{5_12} 
 Let $\{v_i\}_{i \in I}$ be the set of vertices for a polytope. 
Let $J_1,J_2$ form a partition of $I$ such that 
 $$ (conv\{v_j\}_{j \in J_1})^\circ \cap (conv\{v_j\}_{j \in J_2})^\circ \neq \emptyset.$$ 
 Then $conv\{v_j\}_{j \in J_1}$ and $conv\{v_j\}_{j \in J_2}$ are not faces of the polytope.
\end{corollary}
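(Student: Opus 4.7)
The plan is to argue by contradiction and reduce immediately to Proposition \ref{5_11}. Suppose, for contradiction, that $conv\{v_j\}_{j \in J_1}$ is a face of the polytope. Pick a point $w$ in the hypothesized nonempty intersection $(conv\{v_j\}_{j \in J_1})^\circ \cap (conv\{v_j\}_{j \in J_2})^\circ$. The existence of $w$ already forces both $J_1$ and $J_2$ to be nonempty (otherwise one of the two relative interiors would be empty), and since $J_1, J_2$ partition $I$, we also have $J_2 = I \setminus J_1$, so the face $conv\{v_j\}_{j \in J_1}$ is proper, i.e.\ nontrivial in the sense needed to apply Proposition \ref{5_11}.

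Next, using $w \in (conv\{v_j\}_{j \in J_2})^\circ$, write $w = \sum_{j \in J_2} \beta_j v_j$ with $\beta_j > 0$ and $\sum_{j \in J_2} \beta_j = 1$. Extend this to a representation indexed by $I$ by setting $\alpha_j := \beta_j$ for $j \in J_2$ and $\alpha_j := 0$ for $j \in J_1$. Then $w = \sum_{i \in I} \alpha_i v_i$ and $\sum_{i \in I} \alpha_i = 1$. Since also $w \in conv\{v_j\}_{j \in J_1}$, Proposition \ref{5_11} applies and forces $\alpha_j = 0$ for all $j \in I \setminus J_1 = J_2$. But we chose $\alpha_j = \beta_j > 0$ on the nonempty set $J_2$, a contradiction. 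Hence $conv\{v_j\}_{j \in J_1}$ is not a face, and by the symmetric argument (swapping the roles of $J_1$ and $J_2$), neither is $conv\{v_j\}_{j \in J_2}$.

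I do not expect any substantive obstacle here: the content is entirely absorbed by Proposition \ref{5_11}, and the only subtlety is the bookkeeping needed to confirm that $J_2$ is nonempty and that the face under consideration is proper. Both points follow immediately from the hypothesis that the two relative interiors meet, so the proof should take only a short paragraph to write out in final form.
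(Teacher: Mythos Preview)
Your proof is correct and is precisely the argument the paper intends: the corollary is stated immediately after Proposition~\ref{5_11} with no separate proof, and your contradiction via that proposition is the natural one-paragraph derivation. The only bookkeeping points---that $J_1,J_2$ are both nonempty and that the face is therefore proper---are handled cleanly.
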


If we have non negativity in each entry of the vertices of a polytope in $\R^k$, 
we obtain the affine dependency of vertices from the relation of supports of the vertices.

\subsection{Properties of the minimal scalings}
In this section, we provide a characterization of when the minimal scalings are affinely dependent. Using this characterization, we can conclude that  all strict scalings of a given frame have the same 
 tight subframes up to a permutation. 
We also present the uniqueness of orthogonal partitioning property of any set of minimal scalings, which provides all possible tight subframes of a given scaled frame. 

\begin{lemma}
\label{5_15}
Let $\{v_i\}_{i \in I}$ be the set of minimal scalings of a scalable frame $F$.
 Suppose $w=\sum_{i \in I}\alpha_iv_i$ is an affine combination of $\{v_i\}_{i \in I}$ and $w$ has all nonnegative entries. Then $w \in \mathcal{C}(F)$.
\end{lemma}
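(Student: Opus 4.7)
The plan is to verify directly that $w$ satisfies the defining conditions of $\mathcal{C}(F)$, using the two equivalent descriptions of the scalability polytope given in the excerpt:
\[
\mathcal{C}(F)=\Set{c\in\R^k_{\ge 0}\,:\,\sum_{i=1}^k c(i)f_if_i^*=I_n}
=\Set{c\in\R^k_{\ge 0}\,:\,\tilde{G}c=0,\ c(1)+\cdots+c(k)=n}.
\]
Nonnegativity of $w$ is given by hypothesis, so only the identity-resolution (equivalently, the pair $\tilde{G}w=0$ and $\sum_j w(j)=n$) remains to be checked.

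First, I would record that every minimal scaling $v_i\in\mathcal{M}(F)$ lies in $\mathcal{C}(F)$, hence satisfies $\sum_{j=1}^k v_i(j)f_jf_j^*=I_n$ (equivalently $\tilde{G}v_i=0$ and $\sum_j v_i(j)=n$ by Theorem \ref{scale}). Next, I would compute with $w=\sum_{i\in I}\alpha_i v_i$:
\[
\sum_{j=1}^k w(j)\,f_jf_j^*
=\sum_{j=1}^k\Bigl(\sum_{i\in I}\alpha_i v_i(j)\Bigr)f_jf_j^*
=\sum_{i\in I}\alpha_i\sum_{j=1}^k v_i(j)\,f_jf_j^*
=\sum_{i\in I}\alpha_i\,I_n
=\Bigl(\sum_{i\in I}\alpha_i\Bigr)I_n=I_n,
\]
where the last equality uses the affine combination hypothesis $\sum_{i\in I}\alpha_i=1$. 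Together with $w\in\R^k_{\ge 0}$, this places $w$ in the first description of $\mathcal{C}(F)$. Alternatively, one may linearly combine $\tilde{G}v_i=0$ to get $\tilde{G}w=0$, and combine $\sum_j v_i(j)=n$ with $\sum_i\alpha_i=1$ to get $\sum_j w(j)=n$; both routes give the same conclusion.

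There is no real obstacle here: both characterizing conditions are linear in the scaling vector, so they pass through any affine combination, and the nonnegativity hypothesis supplies the only nontrivial piece that an arbitrary affine combination could fail. The role of the lemma, as I read it, is to prepare for the next subsection by saying that the affine hull of $\mathcal{M}(F)$ meets $\R^k_{\ge 0}$ exactly inside $\mathcal{C}(F)$; this is exactly what the short computation above delivers.
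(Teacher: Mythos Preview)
Your proof is correct and follows essentially the same route as the paper: verify directly that $\sum_j w(j)f_jf_j^*=I_n$ by expanding $w=\sum_i\alpha_i v_i$, using that each minimal scaling satisfies the identity resolution, and invoking $\sum_i\alpha_i=1$; nonnegativity is given. The alternative verification via $\tilde{G}w=0$ and $\sum_j w(j)=n$ is not used in the paper but is, as you note, equivalent.
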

\begin{proof}
Let $F = \Set{f_i}_{i=1}^k$. 
 Recall that 
 $$\mathcal{C}(F)=\{c \in \R^k : c(i) \geq 0, \, \sum_{i=1}^kc(i)f_if_i^*=I_n\}.$$
 Since $v_j = ( v_j(1), \ldots, v_j(k))$ is  a minimal scaling of $F$, 
  we have 
  $$\sum_{i=1}^kv_j(i)f_if_i^*=I_n.$$
 Thus $\sum_{i=1}^kw(i)f_if_i^*=\sum_{i=1}^k(\sum_{j \in I}\alpha_jv_j)(i)f_if_i^*=\sum_{j \in I}\alpha_j\sum_{i=1}^kv_j(i)f_if_i^*=\sum_{j \in I}\alpha_j I_n=I_n$ since $\sum_{j \in I}\alpha_j =1$. Therefore, $w \in \mathcal{C}(F)$. 
\end{proof}

\begin{proposition}
\label{5_16}
 Let $\{v_i\}_{i \in I}$ be the set of minimal scalings of a scalable frame. 
If  $supp(v_{i_0}) \subseteq \cup_{j \in I \setminus\{i_0\}} supp(v_j)$ for some $i_0 \in I$, 
then $ v_{i_0} \in \text{aff} \{v_i\}_{i \in I\backslash \{i_0\}}$. 
\end{proposition}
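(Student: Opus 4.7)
The plan is to produce an explicit affine combination of $\{v_j\}_{j \in I \setminus \{i_0\}}$ that equals $v_{i_0}$, using Lemma~\ref{5_15} as the engine. I would begin by averaging the remaining minimal scalings: set
\[
w := \frac{1}{|I|-1} \sum_{j \in I \setminus \{i_0\}} v_j.
\]
As a convex combination of scalings, $w \in \mathcal{C}(F)$, and $\mathrm{supp}(w) = \bigcup_{j \ne i_0} \mathrm{supp}(v_j) \supseteq \mathrm{supp}(v_{i_0})$ by hypothesis. The geometric idea is then to continue past $w$ a bit further in the direction away from $v_{i_0}$ while staying entrywise nonnegative.

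For small $\epsilon > 0$, I would introduce $u_\epsilon := (1+\epsilon)w - \epsilon v_{i_0}$, which is an affine combination of $\{v_j\}_{j \in I}$ with coefficient sum $1$. A three-case check on the index $i$ shows $u_\epsilon$ is entrywise nonnegative for $\epsilon$ sufficiently small: if $i \in \mathrm{supp}(v_{i_0})$ then $w(i) > 0$, so $u_\epsilon(i) = w(i) + \epsilon(w(i) - v_{i_0}(i)) > 0$ once $\epsilon < w(i)/v_{i_0}(i)$; if $i \in \mathrm{supp}(w) \setminus \mathrm{supp}(v_{i_0})$ the entry reduces to $(1+\epsilon) w(i) > 0$; and if $i \notin \mathrm{supp}(w)$ both terms vanish. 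Lemma~\ref{5_15} therefore places $u_\epsilon \in \mathcal{C}(F)$.

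Finally, by Theorem~\ref{spolytope} I can rewrite $u_\epsilon = \sum_{j \in I} \beta_j v_j$ as a convex combination of the minimal scalings with $\beta_j \ge 0$ and $\sum_j \beta_j = 1$. Equating this with $u_\epsilon = \frac{1+\epsilon}{|I|-1} \sum_{j \ne i_0} v_j - \epsilon v_{i_0}$ and collecting the coefficient of $v_{i_0}$ on the two sides yields
\[
(\epsilon + \beta_{i_0})\, v_{i_0} = \sum_{j \ne i_0} \left( \frac{1+\epsilon}{|I|-1} - \beta_j \right) v_j.
\]
Since $\epsilon + \beta_{i_0} > 0$, this can be solved for $v_{i_0}$, and a direct tally of coefficients (using $\sum_j \beta_j = 1$) shows the right-hand side has total coefficient $1$, so $v_{i_0} \in \mathrm{aff}\{v_j\}_{j \in I \setminus \{i_0\}}$.

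The main obstacle is that $v_{i_0}$ is a vertex of $\mathcal{C}(F)$, so any small \emph{convex} perturbation from $v_{i_0}$ stays on the boundary and cannot by itself witness membership in the affine hull of the other vertices. The crux is therefore to work with affine rather than convex combinations, with Lemma~\ref{5_15} serving as the bridge: the support hypothesis is exactly what is needed to make the affine combination $u_\epsilon$ entrywise nonnegative, which then promotes it to a genuine scaling and allows the algebraic manipulation above to recover the desired affine dependence.
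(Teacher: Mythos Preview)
Your argument is correct and follows the same route as the paper's proof: form the average $w$ of the remaining minimal scalings, push past $w$ away from $v_{i_0}$ via $(1+\epsilon)w-\epsilon v_{i_0}$, use the support hypothesis to keep the entries nonnegative so Lemma~\ref{5_15} applies, and then unwind the resulting convex combination to extract the affine expression for $v_{i_0}$. The only cosmetic difference is that the paper writes down an explicit admissible $\epsilon$ at the outset, whereas you verify nonnegativity coordinatewise for sufficiently small $\epsilon$.
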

\begin{proof}
 Let $J=I \setminus \{i_0\}$ and $w = \frac{1}{|J|} \sum_{j\in J} v_j$.  Set 
 $$ \epsilon = \frac{\text{min} \{ w(l) \,:\, w(l) >0\} }{ \text{max} \{ v_{i_0}(l) \,:\,  v_{i_0}(l) >0\}  }. $$
If $w(m) =0$ for some $m = 1, \cdots, k$, then $v_{i_0} (m) =0$ since $supp(v_{i_0}) \subseteq \cup_{j \in I \setminus\{i_0\}} supp(v_j)$. This implies that $((1+\epsilon) w - \epsilon v_{i_0}) (m) =0$. It is clear that if $v_{i_0} (m) =0$, then  $((1+\epsilon) w - \epsilon v_{i_0}) (m) \ge 0$. 
For each $m = 1, \cdots, k,$ such that $v_{i_0} (m) \neq 0$, we also have 
$$ ((1+\epsilon) w - \epsilon v_{i_0}) (m) \ge
(1+\epsilon) w(m) -   \frac{\text{min} \{ w(l) \,:\, w(l) >0\} }{ v_{i_0}(m)   } v_{i_0} (m) \ge 0.  $$
Since $\theta = (1+\epsilon) w - \epsilon v_{i_0} $ is an affine combination of minimal scalings and 
$\theta (m) \ge 0$ for $m=1, \ldots k$, we conclude from Lemma \ref{5_15} that $\theta \in \mathcal{C}(F)$. 
Thus, we have 
  $ \theta = \sum_{i \in I} \alpha_i v_i$ with $\alpha_i \ge 0$ and $\sum_{i \in I} \alpha_i =1$. 
It follow from the expansion that  
$$v_{i_0} =  \sum_{j \in J} \gp{ \frac{(1+\epsilon)/|J| -\alpha_j }{ \epsilon + \alpha_{i_0}} } v_j
\text{ and }
\sum_{j \in J} \frac{(1+\epsilon)/|J| -\alpha_j }{ \epsilon + \alpha_{i_0}} =1,$$
which completes the proof. 
\end{proof}

 \begin{remark}
 We remark  that if  $\{v_i\}_{i \in I}$ is not the set of minimal scalings, then in general Proposition \ref{5_16} is not true. For example, let 
 $$ v_1=(1,0), \, v_2 =(0, 1), v_3=(1,1) $$
 be the vertices of a polytope. 
 Then $supp(v_1) =\Set{1} \subset \Set{1, 2} =supp(v_2) \cup supp(v_3)$, but 
 $v_1 \notin \text{aff} \Set{v_2, v_3}$.
 \end{remark}

 Since the  minimal scalings of a scalable frame is  the set of vertices of a polytope and each entry of the vertices is  non negative, from the propositions in section \S 4.1 and Proposition \ref{5_16}, we have the following equivalent formulations of affine dependency of minimal scalings: 
 
\begin{theorem}
\label{5_17}
 Let $\{v_i\}_{i \in I}$ be the set of minimal scalings of a scalable frame. 
 Then the following are equivalent:
  \begin{enumerate}[1.]
    \item The set of minimal scalings $\{v_i\}_{i \in I}$ is affinely dependent.
    \item There exists $i \in I$ such that $supp(v_i) \subseteq \cup_{j \in I \setminus \{i\}}supp(v_j)$.
     \item There exist disjoint $J_1, J_2 \subseteq I$ such that 
    $$(conv\{v_j\}_{j \in J_1})^\circ \cap  (conv\{v_j\}_{j \in J_2})^\circ 
    \neq 
    \emptyset.$$
    \item There exist disjoint $J_1, J_2 \subseteq I$ such that 
    $$\cup_{j \in J_1}supp(v_j)=\cup_{j \in J_2}supp(v_j).$$ 
  \end{enumerate}
\end{theorem}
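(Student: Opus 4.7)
The plan is to close the loop via the cycle $(2)\Rightarrow(1)\Leftrightarrow(3)\Rightarrow(4)\Rightarrow(2)$. Two of these links are essentially already done in the preceding material: by Theorem \ref{spolytope} the minimal scalings $\{v_i\}_{i\in I}$ are exactly the vertices of the polytope $\mathcal{C}(F)$, so Proposition \ref{5_10} applied to $\mathcal{C}(F)$ delivers $(1)\Leftrightarrow(3)$ directly. Likewise, Proposition \ref{5_16} is precisely the implication $(2)\Rightarrow(1)$, since asserting $v_{i_0}\in\mathrm{aff}\{v_j\}_{j\in I\setminus\{i_0\}}$ is by definition an affine dependence.

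For $(3)\Rightarrow(4)$, suppose $w\in(\mathrm{conv}\{v_j\}_{j\in J_1})^\circ\cap(\mathrm{conv}\{v_j\}_{j\in J_2})^\circ$ with $J_1\cap J_2=\emptyset$. Using (\ref{eq_1}), write
$$w=\sum_{j\in J_1}\alpha_j v_j=\sum_{j\in J_2}\beta_j v_j,$$
where $\alpha_j,\beta_j>0$ and each sum of coefficients is $1$. The key observation is that every $v_j$ has entries in $\R_{\geq 0}$, and since all coefficients are strictly positive no cancellation can occur entrywise. Therefore $w(\ell)>0$ if and only if $v_j(\ell)>0$ for some $j\in J_1$, and similarly for $J_2$. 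Hence
$$\bigcup_{j\in J_1}\mathrm{supp}(v_j)=\mathrm{supp}(w)=\bigcup_{j\in J_2}\mathrm{supp}(v_j),$$
which is exactly (4).

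For $(4)\Rightarrow(2)$, pick any $i_0\in J_1$; since $J_1\cap J_2=\emptyset$ we have $J_2\subseteq I\setminus\{i_0\}$, so
$$\mathrm{supp}(v_{i_0})\subseteq\bigcup_{j\in J_1}\mathrm{supp}(v_j)=\bigcup_{j\in J_2}\mathrm{supp}(v_j)\subseteq\bigcup_{j\in I\setminus\{i_0\}}\mathrm{supp}(v_j),$$
which is (2). This closes the cycle.

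The potentially delicate step is the no-cancellation argument in $(3)\Rightarrow(4)$; everywhere else the implications are either previously established or essentially bookkeeping. The nonnegativity of the entries of minimal scalings (built into the definition of $\mathcal{C}(F)$ via $\R^k_{\geq 0}$) is what makes supports add rather than cancel, and without it the equivalence of (4) with the other three conditions would fail. The remark following Proposition \ref{5_16} illustrates why we also need $\{v_i\}_{i\in I}$ to be the full set of minimal scalings (equivalently, vertices): otherwise Proposition \ref{5_16}, and hence the implication $(2)\Rightarrow(1)$, can break down.
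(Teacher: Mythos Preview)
Your proof is correct and follows essentially the same route as the paper: the paper also closes the loop $2\Rightarrow 1\Rightarrow 3\Rightarrow 4\Rightarrow 2$, invoking Proposition~\ref{5_16} and Proposition~\ref{5_10} for the first two arrows and then using exactly your nonnegativity/no-cancellation observation (stated there as $\mathrm{supp}(w)=\cup_{j\in J_1}\mathrm{supp}(v_j)=\cup_{j\in J_2}\mathrm{supp}(v_j)$) and the identical support-inclusion chain for the last two. Your added commentary on why nonnegativity is essential and on the remark after Proposition~\ref{5_16} is apt but not part of the paper's argument.
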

\begin{proof}
The relation $2 \Rightarrow 1 \Rightarrow 3$ follows from Proposition \ref{5_16} and Proposition \ref{5_10}. \\

\noindent
$3 \Rightarrow 4$.  Let 
$ w \in (conv\{v_j\}_{j \in J_1})^\circ \cap  (conv\{v_j\}_{j \in J_2})^\circ $, then we have 
$$ supp(w) =  \cup_{j \in J_1}supp(v_j)=\cup_{j \in J_2}supp(v_j).$$

\noindent
$4 \Rightarrow 2$.  Let $i \in J_1$, then we have 
$$ supp(v_{i}) \subset   \cup_{j \in J_1}supp(v_j) = \cup_{j \in J_2}supp(v_j) \subset 
 \cup_{j \in I \setminus  \Set{i}}supp(v_j) .$$
\end{proof}

  In the following, we present a series of relations between  minimal scalings and structural properties of a scaled frame.
In order to state these results,   we need the notion of an empty cover of the factor poset of a frame found in 
\cite{BCEKKNN16, CCNST16}. 
The factor poset corresponds to tight subframes of $F$ and the empty cover corresponds to the minimal tight subframes of $F$. 

\begin{definition}
Let  $F= \Set{f_i}_{i \in I}$ be a finite frame in $\R^n$. 
We define its factor poset ${\mathbb F}(F) \subset 2^I$ to be  the set 
$${\mathbb F}(F) := \Set{ J \subset I \,:\, \ \Set{f_j}_{j \in J} \text{ is a tight frame for } \R^n} 
\cup \Set{\emptyset} $$
partially ordered by inclusion. We define the empty cover of ${\mathbb F}(F)$, $EC(F)$, to be the set of $J \in {\mathbb F}(F)$ which covers 
$\emptyset$, that is, 
$$ EC(F) := \left\{ J \in {\mathbb F}(F)\,:\, J \neq \emptyset \text{ and }  \nexists J' \in {\mathbb F}(F)  \right.$$
$$
\left. \text{ with } \emptyset \subsetneq J' \subsetneq J \right\}. $$
\end{definition}

For example, consider the following frame in $\R^2$, 
 $$F =\bk{
 \begin{matrix}
 1 & 0 & -1 & 0 \cr
 0 & 1 & 0 & -1
  \end{matrix}}.
  $$
Then 
$${\mathbb F}(F) = \Set{ \emptyset, \Set{1, 2}, \Set{1, 4},\Set{2, 3},\Set{3, 4},\Set{1, 2, 3, 4} }, $$
$$ EC(F) = \Set{  \Set{1, 2}, \Set{1, 4},\Set{2, 3},\Set{3, 4}}. $$

The following theorem shows that ${\mathbb F}(F) $ can be obtained by taking disjoint union of subsets of $EC(F)$. 

\begin{theorem} [\cite{BCEKKNN16}]
\label{2_6}
If $F$ is a frame, then  
\[\mathbb{F}(F)=\Set{\bigcupdot_{E \in S} E  : S\subseteq EC(F) }
.\]
\end{theorem}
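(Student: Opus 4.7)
The statement has two inclusions, and the plan is to treat them separately, with the forward direction handled by additivity of frame operators and the reverse by induction on $|J|$.

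For the easy inclusion $\supseteq$, I would argue as follows. If $S \subseteq EC(F)$ with its elements pairwise disjoint (so that $\bigcupdot_{E \in S} E$ makes sense), then each $E \in S$ corresponds to a tight subframe, i.e.\ $\sum_{i \in E} f_i f_i^\top = \lambda_E I_n$ for some $\lambda_E > 0$. Because the $E$'s are disjoint, summing gives $\sum_{i \in \bigcupdot_{E\in S} E} f_i f_i^\top = \bigl(\sum_{E \in S} \lambda_E\bigr) I_n$, which is a positive multiple of the identity. Hence $\bigcupdot_{E\in S} E \in \mathbb{F}(F)$, and the empty-$S$ case yields $\emptyset \in \mathbb{F}(F)$ by definition.

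For the harder inclusion $\subseteq$, I plan to use strong induction on $|J|$. The base case $J=\emptyset$ is handled by $S = \emptyset$, and if $J \in EC(F)$ we take $S = \{J\}$. Otherwise, by the definition of the empty cover there exists $\emptyset \subsetneq J' \subsetneq J$ with $J' \in \mathbb{F}(F)$. The crux is to show $J \setminus J' \in \mathbb{F}(F)$. Writing $\sum_{i \in J} f_i f_i^\top = \lambda I_n$ and $\sum_{i \in J'} f_i f_i^\top = \lambda' I_n$, subtraction gives
\[
\sum_{i \in J \setminus J'} f_i f_i^\top = (\lambda - \lambda')\, I_n,
\]
and taking traces yields $\lambda = \tfrac{1}{n} \sum_{i \in J} \|f_i\|^2$ and analogously for $\lambda'$. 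Because the paper assumes all frame vectors are nonzero, removing the strict subset $J'$ leaves a strictly smaller sum, so $\lambda > \lambda' > 0$. Thus $J \setminus J'$ is a genuine tight subframe, i.e.\ $J \setminus J' \in \mathbb{F}(F)$. Applying the induction hypothesis to the strictly smaller sets $J'$ and $J \setminus J'$ produces decompositions $S_1, S_2 \subseteq EC(F)$, and $S := S_1 \cup S_2$ does the job for $J$. Disjointness of the members of $S$ is automatic: elements of $S_1$ are contained in $J'$ and elements of $S_2$ in $J \setminus J'$, so across the two families no overlap can occur, and within each family disjointness is inherited from the inductive hypothesis.

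The main subtlety — and the place where I expect any careful reader to pause — is the strict inequality $\lambda > \lambda'$, since without it $J \setminus J'$ could a priori have a zero frame operator and fail to be a tight frame. This is exactly where the standing nonzero-vector hypothesis is used; if one allowed zero vectors, the induction would not close, because $J \setminus J'$ could consist entirely of zero vectors and one would need a separate convention for treating such sets as ``empty'' members of $\mathbb{F}(F)$. Apart from this point, the proof is a clean interplay between the additive structure of frame operators and the minimality condition built into $EC(F)$.
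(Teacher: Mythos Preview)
The paper does not supply its own proof of this theorem: it is quoted from \cite{BCEKKNN16} and used as a black box, so there is no argument here against which to compare yours. That said, your proof is correct and is essentially the standard one. The $\supseteq$ direction is exactly the additivity of frame operators, and your inductive argument for $\subseteq$ is sound: the key step---that $J\setminus J'$ again indexes a tight subframe because $\lambda-\lambda'>0$---is properly justified via the trace and the standing nonzero-vector hypothesis, and your check that the members of $S_1\cup S_2$ remain pairwise disjoint is correct since each $E\in S_1$ lies in $J'$ and each $E\in S_2$ lies in $J\setminus J'$.
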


  A scaling of a unit-norm frame $F$ is \emph{prime} if the scaled frame $cF$ does not contain any proper, tight subframes and \emph{non-prime} otherwise. The following theorem was proved in \cite{CCNST16}.
\begin{theorem}[\cite{CCNST16}]
 \label{5_19}
 A scaling is non-prime if and only if it is a convex combination of minimal scalings which can be partitioned into two orthogonal subsets. 
 \end{theorem}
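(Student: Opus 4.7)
The plan is to prove both directions by translating between tight-subframe decompositions of the scaled frame $cF$ and orthogonal partitions of an expression for $c$ as a convex combination of minimal scalings. Throughout, two vectors in $\R^k_{\ge 0}$ are orthogonal if and only if their supports are disjoint, so ``partitioned into two orthogonal subsets of minimal scalings'' means the minimal scalings split into two groups whose unions of supports are disjoint. Both implications will hinge on the frame-operator identity (\ref{Parseval}) together with Theorem \ref{spolytope}.

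For the ($\Leftarrow$) direction, I would start from $c = \sum_{i \in I_1} \alpha_i v_i + \sum_{i \in I_2} \alpha_i v_i$ with $\alpha_i > 0$, $\sum \alpha_i = 1$, and combined supports $S_1 := \bigcup_{i \in I_1} \text{supp}(v_i)$ and $S_2 := \bigcup_{i \in I_2} \text{supp}(v_i)$ disjoint, with $I_1, I_2$ both nonempty. Since each minimal scaling $v_i$ is a Parseval scaling on its support, $\sum_{j \in \text{supp}(v_i)} v_i(j) f_j f_j^* = I_n$; summing over $I_1$ with weights $\alpha_i$ and using disjointness of the supports within $S_1$ yields $\sum_{j \in S_1} c(j) f_j f_j^* = \bigl( \sum_{i \in I_1} \alpha_i \bigr) I_n$. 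Hence $\{\sqrt{c(j)}\, f_j\}_{j \in S_1}$ is a tight subframe of $cF$; since $I_2 \neq \emptyset$ gives $S_1 \subsetneq \text{supp}(c)$, this subframe is proper and $c$ is non-prime.

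For the ($\Rightarrow$) direction, I would start from a proper tight subframe $\{\sqrt{c(i)}\, f_i\}_{i \in J}$ of $cF$ with $J \subsetneq I := \text{supp}(c)$ and tightness constant $\lambda > 0$. Subtracting $\sum_{i \in J} c(i) f_i f_i^* = \lambda I_n$ from $\sum_{i \in I} c(i) f_i f_i^* = I_n$ forces $0 < \lambda < 1$ and makes $\{\sqrt{c(i)}\, f_i\}_{i \in I \setminus J}$ a $(1-\lambda)$-tight frame. Then $(c(i)/\lambda)_{i \in J}$ and $(c(i)/(1-\lambda))_{i \in I \setminus J}$ are Parseval scalings of the subframes $\{f_i\}_{i \in J}$ and $\{f_i\}_{i \in I \setminus J}$, and Theorem \ref{spolytope} expresses each as a convex combination of minimal scalings of those subframes. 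Padded by zero to $\R^k$, these become minimal scalings of $F$, so reassembling with weights $\lambda$ and $1-\lambda$ exhibits $c$ as a convex combination of minimal scalings of $F$ partitioned by their supports into a $J$-group and an $(I \setminus J)$-group, which are orthogonal in $\R^k_{\ge 0}$. The main point to verify carefully is this last extension: a minimal scaling of $\{f_i\}_{i \in J}$ padded by zeros is minimal in $F$ because the condition ``$\{f_j : c(j) > 0\}$ has no proper scalable subframe'' depends only on the support, which is unchanged.
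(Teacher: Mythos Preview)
The paper does not supply its own proof of this theorem: it is quoted from \cite{CCNST16} and immediately used as motivation for the subsequent material. So there is no in-paper argument to compare against. That said, your proof is correct and self-contained.

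Both directions are sound. In ($\Leftarrow$), the key identity $\sum_{j\in S_1} c(j)\,f_jf_j^{*} = \bigl(\sum_{i\in I_1}\alpha_i\bigr) I_n$ follows exactly as you indicate, because disjointness of $S_1$ and $S_2$ forces $c(j)=\sum_{i\in I_1}\alpha_i v_i(j)$ for $j\in S_1$; since $I_1,I_2\neq\emptyset$ and all $\alpha_i>0$, both $S_1$ and $S_2$ are nonempty and you get a proper tight subframe. In ($\Rightarrow$), your subtraction argument giving $0<\lambda<1$ is correct (the complement sum is a nonzero positive semidefinite operator), and the renormalized restrictions are genuine Parseval scalings of the corresponding subframes, to which Theorem~\ref{spolytope} applies.

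The only point deserving one extra sentence is the extension step. Your justification is right---minimality of a scaling depends only on the set $\{f_j:c(j)>0\}$---but you should also note explicitly that the zero-padded vector is in fact a scaling of $F$ (i.e., satisfies $\sum_j v(j)f_jf_j^{*}=I_n$), which is immediate since padding by zeros does not change the frame operator. With that, the minimal scalings you produce for the $J$-block and the $(I\setminus J)$-block lie in $\mathcal{M}(F)$, and their supports are contained in $J$ and $I\setminus J$ respectively, so the two groups are orthogonal in $\R^k_{\ge 0}$. This is essentially the mechanism the paper later formalizes in the Observations at the end of \S4 and in the proof of Theorem~\ref{orthoDecom}.
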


Motivated by Theorem \ref{5_19}, we study for a scalable frame $F$ the connection between orthogonal partitioning of minimal scalings and the tight subframes of the scaled frame $cF$.  
We define the smallest orthogonal partition of minimal scalings  $\{v_i\}_{i \in I}$ to be a partition  
 $$ \Set{ \{v_j\}_{j \in J_1}, \ldots, \{v_j\}_{j \in J_a} } $$
 such that
 $J_1 \cup \ldots \sup J_a = J \subseteq I$ and 
  the subsets in the collection are mutually orthogonal 
 (i.e., 
 $ \inpro{ v_i , v_j} =0 \text{ if } i \in J_k, j \in J_l, \text{ and } l \neq k $). 
 Moreover each subset cannot be  partitioned further into orthogonal subsets. 

Suppose $\{v_j\}_{j \in J}$ can be written as 
 \begin{equation}
 \label{twoDecom}
 \{v_j\}_{j \in J}=\{v_j\}_{j \in J_1}\cup \ldots \cup \{v_j\}_{j \in J_a}
  \end{equation}
   \begin{equation}
    \label{twoDecom2}
  =\{v_j\}_{j \in K_1} \cup \ldots \cup \{v_j\}_{j \in K_b},
 \end{equation}
where each collection is a  smallest orthogonal partition of $\{v_j\}_{j \in J}$ for some $J \subset I$. 
If  $J_1 \neq K_1$, then without loss of generality assume that $J_1 \setminus K_1  \neq \emptyset$. 
Then we have 
$$ J_1 =   \gp{J_1 \setminus K_1} \cup \gp{J_1 \cap K_1}.$$
This is a contradiction to the assumption that $J_1$ cannot be partitioned into orthogonal subsets. 
Thus $J_1 = K_1$.  This shows that the supports of the partition in (\ref{twoDecom}) and  (\ref{twoDecom2}) are the same. Hence $a=b$. Therefore we can now state the following theorem (which also appears in 
\cite{DKN15}).

  \begin{theorem}
 \label{5_22}
  Let $\{v_i\}_{i \in I}$ be the set of minimal scalings of a scalable frame. 
  The smallest orthogonal partition of any subset of $\{v_i\}_{i \in I}$ is unique. 
 \end{theorem}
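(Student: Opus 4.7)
The plan is to proceed by contradiction. Suppose some subset of $\{v_i\}_{i \in I}$ admits two smallest orthogonal partitions, indexed by blocks $\{J_1,\ldots,J_a\}$ and $\{K_1,\ldots,K_b\}$, and suppose the two partitions differ as set partitions. My goal is to match each $J_\ell$ with a unique $K_m$ so that the two partitions coincide, forcing $a = b$ and giving a contradiction.

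Fix an arbitrary block $J_1$. Since the indices in $J_1$ must appear somewhere in the $K$-partition, after relabeling I may assume $J_1 \cap K_1 \neq \emptyset$. The key construction is the candidate refinement
$$J_1 = (J_1 \cap K_1) \,\cup\, (J_1 \setminus K_1).$$
I want to show these two pieces are mutually orthogonal, so that indecomposability of $J_1$ in the smallest orthogonal partition forces one of them to be empty. For orthogonality: any $i \in J_1 \cap K_1$ lies in $K_1$, while any $j \in J_1 \setminus K_1$ must lie in some $K_\ell$ with $\ell \neq 1$ (since the $K$-blocks partition $J$). The assumed orthogonality between $K_1$ and $K_\ell$ then yields $\langle v_i, v_j\rangle = 0$ for every such pair, so the two pieces are mutually orthogonal.

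Since $J_1$ cannot be partitioned further into orthogonal subsets, one of $J_1 \cap K_1$ or $J_1 \setminus K_1$ must be empty. The intersection is nonempty by construction, hence $J_1 \setminus K_1 = \emptyset$, i.e., $J_1 \subseteq K_1$. Running the symmetric argument with the roles of the $J$- and $K$-partitions exchanged (again starting from the pair $K_1, J_1$ which meet nontrivially) gives $K_1 \subseteq J_1$, and therefore $J_1 = K_1$.

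Finally, stripping off the matched pair $J_1 = K_1$ and repeating the argument on the remaining blocks produces a bijection between $\{J_1,\ldots,J_a\}$ and $\{K_1,\ldots,K_b\}$, forcing $a=b$ and the two partitions to agree. The main obstacle is establishing orthogonality of the refinement $J_1 = (J_1 \cap K_1) \cup (J_1 \setminus K_1)$, but as noted this falls out of the orthogonality of the $K$-partition together with the observation that $J_1 \setminus K_1$ is contained in $K_2 \cup \cdots \cup K_b$; all remaining steps are bookkeeping.
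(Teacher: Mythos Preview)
Your proof is correct and follows essentially the same approach as the paper: both argue that if $J_1$ and $K_1$ overlap, the splitting $J_1=(J_1\cap K_1)\cup(J_1\setminus K_1)$ into orthogonal pieces contradicts the minimality of the block $J_1$, forcing $J_1=K_1$, and then iterate. Your version is in fact more explicit than the paper's, which leaves the verification that the two pieces are mutually orthogonal (and that both are nonempty) to the reader.
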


\begin{observation}
Let $F$ be a scalable frame and $\{v_i\}_{i \in I}$ be the set of minimal scalings. 
Suppose $E \in EC(F)$. 
Define $c\in \R^k$ by 
$c(i)=
\begin{cases}
1 & \text{ if }i \in E,\cr
0 & \text{ otherwise}.
\end{cases}
$
Then $c\in {\mathcal C}(F)$ and $c =\sum_{j \in J} \alpha_j v_j$ for some $J\subset I$. 
From this it follows that 
$E = \cup_{j \in J} supp(v_j)$.
\end{observation}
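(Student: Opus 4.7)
The plan is to derive the three assertions, that $c \in \mathcal{C}(F)$, that $c$ admits a representation as a convex combination of minimal scalings indexed by some $J \subset I$, and that $E = \bigcup_{j \in J} supp(v_j)$, as successive consequences of the definition of $EC(F)$, of Theorem \ref{spolytope}, and of the non-negativity of scalings. No new machinery is required; the content is essentially tracking supports through a convex combination.

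First, since $E \in EC(F)$, the subframe $\{f_i\}_{i \in E}$ is a tight frame for $\R^n$, so there exists $\lambda > 0$ with $\lambda \sum_{i \in E} f_i f_i^{*} = I_n$. Setting $c(i) = \lambda$ for $i \in E$ and $c(i) = 0$ otherwise yields a genuine scaling, hence $c \in \mathcal{C}(F)$. (The observation's choice $c(i) = 1$ should be read up to the positive scalar $\lambda$; the literal choice $c(i)=1$ produces a Parseval scaled subframe only when the tight constant of $\{f_i\}_{i \in E}$ equals $1$. This does not affect any support-level conclusion that follows.) Next, Theorem \ref{spolytope} gives $\mathcal{C}(F) = conv(\mathcal{M}(F))$, so $c$ can be written as $c = \sum_{i \in I} \beta_i v_i$ with $\beta_i \geq 0$ and $\sum_{i \in I} \beta_i = 1$. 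Taking $J := \{i \in I : \beta_i > 0\}$ and $\alpha_j := \beta_j$ produces the required convex combination $c = \sum_{j \in J} \alpha_j v_j$.

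For the support equality, I would establish two inclusions by exploiting positivity. If $j \in J$ and $v_j(i) > 0$, then since every term in the sum $c(i) = \sum_{j' \in J} \alpha_{j'} v_{j'}(i)$ is non-negative, we obtain $c(i) \geq \alpha_j v_j(i) > 0$, so $i \in supp(c) = E$; this gives $\bigcup_{j \in J} supp(v_j) \subseteq E$. Conversely, if $i \in E$, then $c(i) > 0$, and because each $\alpha_j > 0$ and each $v_j(i) \geq 0$, the identity $c(i) = \sum_{j \in J} \alpha_j v_j(i)$ forces $v_j(i) > 0$ for at least one $j \in J$, so $i \in supp(v_j)$. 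The only step requiring any care is the minor reconciliation between the indicator form of $c$ given in the statement and the Parseval condition required for membership in $\mathcal{C}(F)$; once that is handled, the proof is routine bookkeeping with supports and convex coefficients.
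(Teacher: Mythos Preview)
The paper states this as an Observation without proof, so there is no argument in the paper to compare against; your write-up supplies exactly the routine details the authors leave implicit. Your approach---apply Theorem~\ref{spolytope} to write $c$ as a convex combination of minimal scalings, then read off $supp(c)=\bigcup_{j\in J}supp(v_j)$ from the non-negativity of the $v_j$ and the strict positivity of the $\alpha_j$---is the natural one and is correct.

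You are also right to flag the normalization issue: membership in $\mathcal{C}(F)$ requires $\sum_{i} c(i) f_i f_i^{*}=I_n$, whereas $E\in EC(F)$ only guarantees that $\{f_i\}_{i\in E}$ is \emph{tight}, so the literal choice $c(i)=1$ for $i\in E$ need not lie in $\mathcal{C}(F)$. Replacing $1$ by the appropriate constant $\lambda$ fixes this and, as you note, leaves all support-level conclusions unchanged.
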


We now state the theorem about unique orthogonal partitioning property. 
Statement of Theorem \ref{orthoDecom} appears in \cite{DKN15}. Its proof  is presented only in this paper.

\begin{theorem}
\label{orthoDecom}
Let $\{v_i\}_{i \in I}$ be the set of minimal scalings of a scalable frame $F$ 
and let $c $ be a scaling of $F$. 
Suppose $c = \sum_{j \in J} \alpha_j v_j$ such that 
$J \subseteq I$ and $\alpha_j >0$ with $\sum_{j \in J} \alpha_j =1$.
Then $\{v_i\}_{ij\in J}$ can be orthogonally partitioned as 
\begin{equation}
\label{decom}
 c = \sum_{i \in J_1} \alpha_i v_i + \ldots + \sum_{i \in J_a} \alpha_i v_i,  
 \end{equation}
where $ \cup_{i \in J_j} supp(v_i) $ for $ j = 1, \ldots, a $ are pairwise disjoint subsets of $EC(cF)$.  
If $EC(cF)$ is pairwise disjoint, then 
$\{v_j\}_{j \in J_1}\cupdot \ldots \cupdot \{v_j\}_{j \in J_a}$ is the  smallest orthogonal partition of
 $\{v_i\}_{i \in J_1 \cupdot \ldots \cupdot J_a}$ so that the orthogonal decomposition in (\ref{decom}) is unique. 
\end{theorem}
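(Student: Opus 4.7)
The plan is to construct the orthogonal partition explicitly via a support-overlap equivalence relation on $J$, verify each block is a minimal tight subframe of $cF$, and then obtain uniqueness under the disjointness hypothesis from Theorem \ref{2_6} combined with the uniqueness of the smallest orthogonal partition. I would define an equivalence relation $\sim$ on $J$ by declaring $i \sim j$ whenever there is a chain $i = i_0, i_1, \ldots, i_m = j$ in $J$ with $supp(v_{i_{\ell-1}}) \cap supp(v_{i_\ell}) \neq \emptyset$ for each $\ell$, and let $J_1, \ldots, J_a$ be the resulting equivalence classes. Set $E_\ell := \bigcup_{i \in J_\ell} supp(v_i)$; by construction the $E_\ell$ are pairwise disjoint, so for $i \in J_\ell$ and $j \in J_m$ with $\ell \neq m$ the nonnegative vectors $v_i, v_j$ have disjoint supports and are therefore orthogonal, yielding the grouping in (\ref{decom}). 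Because $v_j$ vanishes off $E_m$ whenever $j \in J_m$, a direct computation
\[ \sum_{i \in E_\ell} c(i) f_i f_i^* \;=\; \sum_{j \in J_\ell} \alpha_j \sum_{i=1}^{k} v_j(i) f_i f_i^* \;=\; \Big(\sum_{j \in J_\ell} \alpha_j\Big) I_n \]
shows that $\{\sqrt{c(i)} f_i\}_{i \in E_\ell}$ is tight, so $E_\ell \in \mathbb{F}(cF)$.

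To upgrade $E_\ell \in \mathbb{F}(cF)$ to $E_\ell \in EC(cF)$ I would argue by contradiction. If some $\emptyset \neq E' \subsetneq E_\ell$ lay in $\mathbb{F}(cF)$, then $E'' := E_\ell \setminus E'$ is also in $\mathbb{F}(cF)$; normalizing $c|_{E'}$ and $c|_{E''}$ by their tight constants and applying Theorem \ref{spolytope} expresses each as a convex combination of minimal scalings of $F$ supported within $E'$ and $E''$ respectively. The combined family of minimal scalings then produces a second expression for $c|_{E_\ell}$ that admits a nontrivial orthogonal partition along the $E' / E''$ divide. On the other hand, by construction $\{v_j\}_{j \in J_\ell}$ forms a single support-connected equivalence class and admits no orthogonal splitting; Theorem \ref{5_22} applied to these two orthogonal-partition data then forces the contradiction. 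The main technical obstacle here is ruling out the possibility that some $v_j$ with $j \in J_\ell$ has support straddling $E'$ and $E''$; for this I expect to need the vertex characterization of minimal scalings (Theorem \ref{minChar} and the consequent linear independence of $\{f_i f_i^* : i \in supp(v_j)\}$), together with Proposition \ref{5_16}, to force the alternative convex decomposition of $c|_{E_\ell}$ into minimal scalings to be compatible with $\sum_{j \in J_\ell} \alpha_j v_j$ in a controlled way.

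For the uniqueness claim, assume the elements of $EC(cF)$ are pairwise disjoint. Theorem \ref{2_6} then yields a unique expression $supp(c) = \bigcupdot_\ell E'_\ell$ with $E'_\ell \in EC(cF)$; since the $E_\ell$ constructed above lie in $EC(cF)$, are pairwise disjoint, and have union $supp(c)$, they must coincide with the $E'_\ell$ as an unordered collection. This forces $J_\ell = \{i \in J : supp(v_i) \subseteq E_\ell\}$, and by the connectedness defining the equivalence classes no $J_\ell$ can be further split into orthogonal subsets, so $\{J_1, \ldots, J_a\}$ is a smallest orthogonal partition of $\{v_i\}_{i \in J}$. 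By Theorem \ref{5_22} it is therefore unique, which in turn makes the decomposition (\ref{decom}) unique; the coefficients $\alpha_i$ are recovered block-by-block from the scaling identity within each $E_\ell$.
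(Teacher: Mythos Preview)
Your approach differs structurally from the paper's, and the difference is not just cosmetic: the paper starts from the empty-cover decomposition $supp(c)=E_1\cupdot\cdots\cupdot E_a$ furnished by Theorem~\ref{2_6} (so $E_j\in EC(cF)$ \emph{by construction}), then rescales each $c|_{E_j}$ to a Parseval scaling and rewrites it as a convex combination of minimal scalings supported in $E_j$. In particular, the sets $J_1,\ldots,J_a$ the paper produces need not partition the \emph{original} $J$; the representation in~\eqref{decom} may involve different minimal scalings and different coefficients than the given $c=\sum_{j\in J}\alpha_j v_j$.

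Your route---partition the given $J$ by support-connectivity and then argue that each block $E_\ell=\bigcup_{j\in J_\ell}supp(v_j)$ lies in $EC(cF)$---breaks down exactly at the ``upgrade'' step you flagged. It is simply false that a support-connected block of the chosen representation must lie in $EC(cF)$. Take $F=[e_1\ e_2\ e_1\ e_2]$ in $\R^2$, whose minimal scalings are $v_1=(1,1,0,0)$, $v_2=(1,0,0,1)$, $v_3=(0,1,1,0)$, $v_4=(0,0,1,1)$. With $J=\{1,2,3\}$ and $\alpha_j=\tfrac13$ one has $c=(\tfrac23,\tfrac23,\tfrac13,\tfrac13)$; the supports of $v_1,v_2,v_3$ overlap pairwise through the indices $1$ and $2$, so your equivalence relation yields a single class with $E_\ell=\{1,2,3,4\}$. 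But $\{1,2\}$ is a proper tight subframe of $cF$ (it gives $\tfrac23 I$), so $E_\ell\notin EC(cF)$. Here $EC(cF)=\{\{1,2\},\{3,4\}\}$, and the paper's construction instead rewrites $c=\tfrac23 v_1+\tfrac13 v_4$, an orthogonal decomposition indexed by $\{1\}\cupdot\{4\}\neq J$.

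The contradiction you sketch cannot rescue this: when you decompose $c|_{E'}$ and $c|_{E''}$ you obtain convex combinations of minimal scalings that may be entirely different from $\{v_j\}_{j\in J_\ell}$, so Theorem~\ref{5_22} (uniqueness of the smallest orthogonal partition of a \emph{fixed} subset) gives no leverage. The existence of two different convex representations of the same scaling is exactly what happens in the affinely dependent case (Theorem~\ref{5_17}), and it carries no contradiction. The clean way through is the paper's: start from $EC(cF)$, not from $J$.
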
 

\begin{proof}
Since $cF$ is a Parseval frame  $supp(c) \in \mathbb{F}(cF)$. From Theorem \ref{2_6}, 
$$ supp(c) = E_1 \cupdot \ldots \cupdot E_a, \quad E_i \in EC(cF). $$
Note that the subframe $\Set{ \sqrt{c(i)} f_i}_{i \in E_j}, \ j=1, \ldots a$, is only a tight subframe but not Parseval in general. However, there exists 
$\lambda_j >0 $ such that $\Set{ \sqrt{ \lambda_j c(i)} f_i}_{i \in E_j}, \ j=1, \ldots a$  is Parseval.  
For each $j =1, \ldots, a$, set  $c_j \in \R^k_{\ge 0}$ by
$$ c_j (i) := \begin{cases} 
{\lambda_j c(i)}  & \text{ if } i \in E_j  \cr
0 & \text{otherwise}.
\end{cases}
$$
Then since  $c_j$ is a scaling of $F$, $c_j = \sum_{i \in J_j} \alpha_i v_i$ for some $\alpha_i >0$ and $J_j \subset I$.
This implies that $c$ can be orthogonally partitioned as follows: 
$$ c =\sum_{j=1}^a \frac{c_j}{\lambda_j} =   \frac{1}{\lambda_j} \sum_{j=1}^a 
\gp{ \sum_{i \in J_j} \alpha_i v_i  },$$
where $\cup_{i \in J_j} supp(v_i) = E_j$.
We now suppose that $EC(cF)$ is pairwise disjoint. 
 Let $\{v_j\}_{j \in K_1}\cupdot \ldots \cupdot \{v_j\}_{j \in K_b}$ be  the  smallest orthogonal partition of
 $\{v_i\}_{i \in J_1 \cupdot \ldots \cupdot J_a}$. 
 To show that $\{v_j\}_{j \in K_1}\cupdot \ldots \cupdot \{v_j\}_{j \in K_b}$ and $\{v_j\}_{j \in J_1}\cupdot \ldots \cupdot \{v_j\}_{j \in J_a}$ are the same orthogonal partition of
 $\{v_i\}_{i \in J_1 \cupdot \ldots \cupdot J_a}$, we redorder $K_1, \ldots, K_b$ and $J_1, \ldots, J_a$ such that for $i <j$
 $$ \min \Set{s \,:\, v_s \in K_i} <   \min \Set{s \,:\, v_s \in K_j} \text{ and } $$
 $$ \min \Set{s \,:\, v_s \in J_i} <   \min \Set{s \,:\, v_s \in J_j}.$$
Note that  $v_1 \in  \{v_j\}_{j \in J_1} \cap \{v_j\}_{j \in K_1} $. 
 We now show that 
 $ \{v_j\}_{j \in J_1} = \{v_j\}_{j \in K_1}.$ 
 Suppose that $  \{v_j\}_{j \in K_1} \subsetneq  \{v_j\}_{j \in J_1}$. Then 
 $$c_1 = \sum_{i \in J_1 \setminus K_1} \alpha_i v_i + \sum_{i \in K_1} \alpha_i v_i.$$
 Since $\cup_{i \in J_1} supp(v_i) = E_1 \in EC(cF)$ the above equation if true produces non empty subsets of $E_1$ in ${\mathbb F}(cF)$,  which is a contradiction. 
 Similarly, $ \{v_j\}_{j \in J_i} \cap \{v_j\}_{j \in K_i} \neq \emptyset$ implies that  
 $ \{v_j\}_{j \in J_i} = \{v_j\}_{j \in K_i}$ for $i=1, \ldots, a$. 
 This shows   $a =b$ and the uniqueness of the decomposition. 
\end{proof}
From theorem \ref{orthoDecom} 
we note that if ${\mathcal M}(F)$ is the set of minimal scalings of a scalable frame $F$, 
then for any 
$c\in \mathcal{C}(F)$, we can obtain all tight subframes of $cF$ using Theorem \ref{2_6}.
 Theorem \ref{5_19}  also tells us the conditions for $c$ under which the set $EC(cF)$ is $\{\emptyset,\{1,\ldots,k\}\}$. 
 Moreover, Theorem \ref{orthoDecom} gives conditions for $c$ under which the empty cover of $cF$ is pairwise disjoint. That is, if we  have two different collection of subsets  of minimal scalings for the orthogonal decomposition (\ref{decom}), then $EC(cF)$ is not pairwise disjoint.  
 We note the orthogonal decomposition (\ref{decom})  is not unique in general.  
 For example, consider the following frame in $\R^2$, 
 $$F =\bk{
 \begin{matrix}
 1 & 0 & 1 & 0 & 0 & 1 \cr
 0 & 1 & 0 & 1 & 1 & 0
  \end{matrix}}.
  $$
The minimal scalings are 
 
 $$
 \begin{matrix}
 v_1 = ( 1, 1, 0, 0, 0, 0), & \quad v_6 = ( 0, 1, 0, 0, 0, 1),\cr
v_2 = ( 0, 0, 1, 1,  0, 0),& \quad v_7 = ( 0, 1, 1, 0, 0, 0),\cr
v_3 = (  0, 0, 0, 0, 1, 1), & \quad v_8= ( 1, 0, 0, 0, 1, 0),\cr
 v_4 = ( 0, 0, 0, 1, 0, 1), & \quad v_9 = ( 1, 0, 0, 1, 0, 0).\cr
 v_5 = ( 0, 0, 1, 0, 1, 0), & \quad \cr
   \end{matrix}
 $$
 Then  the scaling $c =\frac{1}{3}\gp{ 1, 1, 1, 1, 1, 1} $ has the following distinct  orthogonal decompositions:
 $$ c = \gp{\frac{1}{3} v_1} + \gp{\frac{1}{3} v_2} + \gp{\frac{1}{3} v_3}  $$
 $$ =\gp{\frac{1}{3} v_1} + \gp{\frac{1}{3} v_4} + \gp{\frac{1}{3} v_5}.  $$
 The two different orthogonal decompositions of a scaling $c$  guarantees that $\{v_i\}_{i \in J_1 \cupdot \ldots \cupdot J_a}$ is affinely dependent.

\begin{theorem}
\label{5_20}
Let $\{v_i\}_{i \in I}$ be the set of minimal scalings of a scalable frame $F$ and $c$ be a scaling. 
If $\{v_i\}_{i \in I}$  is affinely independent, then  $EC(cF)$ is pairwise disjoint.
\end{theorem}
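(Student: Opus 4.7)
My plan is to argue by contradiction, using affine independence of $\{v_{i}\}_{i\in I}$ to force any ``rogue'' element of $EC(cF)$ to coincide with a block of the orthogonal partition of $c$ produced by Theorem~\ref{orthoDecom}. Accordingly I start by writing $c=\sum_{i\in J}\alpha_{i}v_{i}$ in the unique convex form guaranteed by affine independence, and invoke Theorem~\ref{orthoDecom} to obtain the orthogonal partition $J=J_{1}\,\cupdot\,\cdots\,\cupdot\,J_{a}$ with $E_{\ell}:=\bigcup_{i\in J_{\ell}}\mathrm{supp}(v_{i})\in EC(cF)$ pairwise disjoint and $\mathrm{supp}(v_{i})\subseteq E_{\ell}$ for $i\in J_{\ell}$. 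The goal becomes to show $EC(cF)=\{E_{1},\ldots,E_{a}\}$.

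Fix any $E^{*}\in EC(cF)$ and set $\mu^{*}=n^{-1}\sum_{i\in E^{*}}c(i)\in (0,1]$. The renormalized restriction $c_{E^{*}}:=c|_{E^{*}}/\mu^{*}$ is a Parseval scaling of $F$ supported exactly on $E^{*}$. When $\mu^{*}<1$ the complement $c':=(c-\mu^{*}c_{E^{*}})/(1-\mu^{*})$ is also a Parseval scaling, supported on $\mathrm{supp}(c)\setminus E^{*}$, so $c=\mu^{*}c_{E^{*}}+(1-\mu^{*})c'$. (The boundary case $\mu^{*}=1$ forces $E^{*}=\mathrm{supp}(c)$ and, via Theorem~\ref{2_6}, $a=1$ with $E^{*}=E_{1}$, which is already the desired conclusion.) Expanding $c_{E^{*}}=\sum_{i\in L^{*}}\eta_{i}v_{i}$ and $c'=\sum_{i\in L'}\zeta_{i}v_{i}$ in convex form and matching the resulting convex expansion of $c$ against $\sum_{J}\alpha_{i}v_{i}$ via affine independence forces $J=L^{*}\cup L'$; in particular, $L^{*}\subseteq J$. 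Each $v_{i}\in L^{*}$ therefore lies in a unique $J_{\ell(i)}$ and satisfies $\mathrm{supp}(v_{i})\subseteq E_{\ell(i)}$.

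Since the $E_{\ell}$'s are pairwise disjoint, the assignment $i\mapsto \ell(i)$ defines an orthogonal partition of $\{v_{i}\}_{i\in L^{*}}$. On the other hand, minimality of $E^{*}$ in $EC(cF)$---no proper subset of $E^{*}$ is tight with weights $c$---transfers verbatim to $EC(c_{E^{*}}F)=\{E^{*}\}$, since $c_{E^{*}}$ and $c|_{E^{*}}$ differ only by the positive scalar $1/\mu^{*}$. Applying Theorem~\ref{orthoDecom} to $c_{E^{*}}$---whose $EC$ is trivially pairwise disjoint---therefore forces $\{v_{i}\}_{i\in L^{*}}$ to be a single orthogonal block, so $\ell(i)\equiv\ell_{0}$. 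Hence $E^{*}=\bigcup_{i\in L^{*}}\mathrm{supp}(v_{i})\subseteq E_{\ell_{0}}$, and minimality of $E_{\ell_{0}}$ in $EC(cF)$ forces $E^{*}=E_{\ell_{0}}$, completing the plan. The step I expect to be the main obstacle is verifying $EC(c_{E^{*}}F)=\{E^{*}\}$ cleanly from the definitions---once this is in hand, the combination of uniqueness of the convex expansion (from affine independence) and the pairwise disjointness of the $E_{\ell}$'s makes the remaining bookkeeping automatic.
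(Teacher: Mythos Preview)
Your argument is correct, but it takes a genuinely different route from the paper's. The paper proceeds by a short direct contradiction: assuming two elements $E_1,E_2\in EC(cF)$ overlap, it writes each $E_i$ as a union of supports of minimal scalings, singles out one $v_1$ with $\operatorname{supp}(v_1)\subseteq E_1$ but $\operatorname{supp}(v_1)\nsubseteq E_2$, observes that $\operatorname{supp}(c)\setminus E_2\in\mathbb{F}(cF)$ is again a union of supports of minimal scalings (none of which is $v_1$), and concludes that $\operatorname{supp}(v_1)$ is covered by the supports of the \emph{other} $v_j$'s. Theorem~\ref{5_17} (item~2) then yields affine dependence. The whole argument is four lines and uses only the support criterion for affine dependence.

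Your proof instead goes through the decomposition machinery of Theorem~\ref{orthoDecom}: you first extract the pairwise disjoint family $\{E_\ell\}$ coming from the orthogonal partition of the unique convex expansion of $c$, and then show that any $E^*\in EC(cF)$ must coincide with one of the $E_\ell$ by splitting $c$ along $E^*$, invoking uniqueness of the convex expansion to force $L^*\subseteq J$, and finally using the ``pairwise disjoint $\Rightarrow$ smallest orthogonal partition'' clause of Theorem~\ref{orthoDecom} (applied to $c_{E^*}$, whose empty cover is a singleton) to rule out a nontrivial partition of $\{v_i\}_{i\in L^*}$. This is sound; the step you flagged, $EC(c_{E^*}F)=\{E^*\}$, is indeed immediate since tightness is scale-invariant and $E^*\in EC(cF)$. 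What your approach buys is a more structural picture: you actually identify $EC(cF)$ explicitly as $\{E_1,\dots,E_a\}$, whereas the paper only shows pairwise disjointness. The cost is that you lean on the heavier Theorem~\ref{orthoDecom} (including its second clause) rather than the elementary support criterion of Theorem~\ref{5_17}, so the paper's proof is considerably shorter.
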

\begin{proof}
Suppose that $EC(cF)$ is not pairwise disjoint. Then there are two different sets $E_1, \, E_2 \in EC(cF)$ such that $E_1 \cap E_2 \neq \emptyset$. 
Let $$\cup_{j \in J_1} supp(v_j) = E_1, \quad \cup_{j \in J_2} supp(v_j) = E_2. $$
Since $E_1 \neq E_2$, without loss of generality, we assume that $1\in J_1$  and $supp(v_1) \nsubseteq E_2$ so that $1\notin J_2$. 
By Theorem \ref{charTight}, $supp(c) \setminus E_2 \in \mathbb{F}(cF)$ so that 
$supp(c) \setminus E_2 = \cup_{j \in S} supp(v_j) $ for some $S \subset I$. 
Thus $supp(v_1) \nsubseteq \cup_{j \in S} supp(v_j) $ so that $1 \notin S$.  
But $\gp{supp(c) \setminus E_2} \cup  \bigcup_{j \in J_2} supp(v_j)  =  supp (c)$ which implies that 
$$ supp(v_1) \subseteq \bigcup_{j \in S} supp(v_j)  \cup \bigcup_{j \in J_2} supp(v_j) \subseteq \cup_{j \in I\setminus \Set{1}} supp(v_j).$$
Then by Theorem \ref{5_17}, $\{v_i\}_{i \in I}$  is affinely dependent.  
\end{proof}
  
Recall that $c$ is a strict scaling $c$  if $supp(c)=\{1,\ldots,k\}$. 
It is not necessary for a strict scaling $c$ to be a convex combination with contribution from all of the minimal scalings  $\{v_i\}_{i \in I}$. 
However, if $\{1,\ldots,k\}$ is the union of  the support of all minimal scalings, 
a strict scaling $c$ must have all positive coefficients in the convex combination of minimal scalings. 

\begin{proposition}
\label{5_23}
Let $\{v_i\}_{i \in I}$ be the set of minimal scalings of a scalable frame $F$ and $c$ be a strict scaling. 
Suppose that $\{v_i\}_{i \in I}$ are affinely independent. 
Then all the coefficient of the convex combination of minimal scalings for $c$ are positive. 
\end{proposition}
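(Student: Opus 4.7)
The plan is to argue by contradiction, using the equivalence between affine dependence of minimal scalings and the support-covering condition from Theorem \ref{5_17}.

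First, I would write $c$ as a convex combination of the minimal scalings:
$c = \sum_{i \in I} \alpha_i v_i$ with $\alpha_i \ge 0$ and $\sum_{i \in I} \alpha_i = 1$, which is possible by Theorem \ref{spolytope}. Suppose toward a contradiction that some coefficient vanishes, say $\alpha_{i_0} = 0$. Let $I' = \{\, i \in I \setminus \{i_0\} : \alpha_i > 0 \,\}$, so that $c = \sum_{i \in I'} \alpha_i v_i$.

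Next I would exploit the hypothesis that $c$ is strict together with nonnegativity. Since every entry $v_i(j)$ is nonnegative and every $\alpha_i$ with $i \in I'$ is positive, we get the support identity $\mathrm{supp}(c) = \bigcup_{i \in I'} \mathrm{supp}(v_i)$. But $c$ is strict, so $\mathrm{supp}(c) = \{1, \dots, k\}$. In particular,
\[
\mathrm{supp}(v_{i_0}) \subseteq \{1, \dots, k\} = \bigcup_{i \in I'} \mathrm{supp}(v_i) \subseteq \bigcup_{i \in I \setminus \{i_0\}} \mathrm{supp}(v_i).
\]

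Finally, I would invoke the equivalence $(2) \Rightarrow (1)$ in Theorem \ref{5_17}: the support containment just established forces $\{v_i\}_{i \in I}$ to be affinely dependent, contradicting the hypothesis. Therefore no $\alpha_i$ can be zero, and all coefficients in the convex combination are strictly positive.

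The argument is genuinely short; there is no real obstacle once one notices that strict scalings force the total support to be $\{1, \dots, k\}$, which then makes the support-covering criterion of Theorem \ref{5_17} immediately applicable. The only subtle point worth stating cleanly is the step where nonnegativity of the $v_i$ and positivity of the $\alpha_i$ for $i \in I'$ combine to give equality (not merely inclusion) of supports, so that one really gets $\mathrm{supp}(v_{i_0})$ inside a union indexed over $I \setminus \{i_0\}$.
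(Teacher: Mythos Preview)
Your proof is correct and follows essentially the same route as the paper's: assume some coefficient $\alpha_{i_0}=0$, use that $c$ is strict to deduce $\mathrm{supp}(v_{i_0}) \subseteq \bigcup_{j \in I \setminus \{i_0\}} \mathrm{supp}(v_j)$, and invoke Theorem~\ref{5_17} for a contradiction. Your write-up is slightly more careful in isolating the positively-weighted index set $I'$ and justifying the support equality from nonnegativity, but the argument is otherwise identical.
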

\begin{proof}
Let $c = \sum_{j \in I} \alpha_j v_j$.
Suppose $\alpha_i= 0$ for some $i \in I$. 
Then $c = \sum_{j\in I\backslash \Set{i}} \alpha_j v_j$. 
Since $c$ is a strict scaling, 
$$supp(v_i) \subset supp(c) = \cup_{j\in I \setminus \{i\}}supp(v_j), $$  which contradicts the assumption.

\end{proof}

We remark that when the sets in $EC(F)$ are pairwise disjoint then $\{v_i\}_{i \in I}$ are affinely independent. 
If $\{v_i\}_{i \in I}$ are affinely independent, then all strict scalings give the same poset structure of the scaled frames.

\begin{theorem}
\label{5_24}
Let $\{v_i\}_{i \in I}$ be the set of minimal scalings of a scalable frame $F$ 
which are affinely independent. 
Then for any strict scalings $c_1$ and $c_2$,  we have 
$$ EC(c_1F) =EC(c_2F).$$
Furthermore, $ EC(c_1F)$ is pairwise disjoint. 
\end{theorem}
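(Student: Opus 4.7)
The plan is to prove both assertions by combining four ingredients already established in the excerpt: Theorem \ref{5_20} (affine independence of $\{v_i\}_{i\in I}$ forces $EC(cF)$ to be pairwise disjoint for every scaling $c$), Proposition \ref{5_23} (under affine independence, every strict scaling expands as a \emph{strictly positive} convex combination of all the minimal scalings), Theorem \ref{orthoDecom} (existence and, when $EC(cF)$ is pairwise disjoint, uniqueness of the smallest orthogonal partition encoded by $c$), and Theorem \ref{5_22} (the smallest orthogonal partition of a subset of minimal scalings is intrinsic to that subset). The ``furthermore'' clause is immediate from Theorem \ref{5_20} applied to either strict scaling, so the real content is to establish the equality $EC(c_1F)=EC(c_2F)$.

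First I would apply Proposition \ref{5_23} to each strict scaling $c_m$, $m=1,2$, to write $c_m=\sum_{j\in I}\alpha_j^{(m)} v_j$ with every $\alpha_j^{(m)}>0$, so that the index set $J$ appearing in the hypothesis of Theorem \ref{orthoDecom} is forced to be the entire $I$. Combined with Theorem \ref{5_20}, this lets me invoke the uniqueness clause of Theorem \ref{orthoDecom}: the full set $\{v_j\}_{j\in I}$ admits a smallest orthogonal partition $\{v_j\}_{j\in J_1^{(m)}}\cupdot\cdots\cupdot\{v_j\}_{j\in J_{a_m}^{(m)}}$, with $E_\ell^{(m)}:=\cup_{j\in J_\ell^{(m)}} supp(v_j)$ pairwise disjoint members of $EC(c_mF)$.

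Next I would argue that the collection $\{E_1^{(m)},\ldots,E_{a_m}^{(m)}\}$ is in fact all of $EC(c_mF)$. Since $c_m$ is strict, $supp(c_m)=\{1,\ldots,k\}=E_1^{(m)}\cupdot\cdots\cupdot E_{a_m}^{(m)}$; any $E\in EC(c_mF)$ sits inside $\{1,\ldots,k\}$ and, by the already-established pairwise disjointness of $EC(c_mF)$, must either coincide with some $E_\ell^{(m)}$ or be disjoint from all of them, the latter alternative being impossible since $E\neq\emptyset$. Finally, Theorem \ref{5_22} says the smallest orthogonal partition of $\{v_i\}_{i\in I}$ is unique, so the partitions obtained from $c_1$ and $c_2$ coincide after reindexing, forcing $\{E_\ell^{(1)}\}=\{E_\ell^{(2)}\}$ and hence $EC(c_1F)=EC(c_2F)$.

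I expect the main subtlety to be the identification of the $E_\ell^{(m)}$'s with the entirety of $EC(c_mF)$ rather than merely a subfamily of it: this is precisely where the strictness hypothesis is indispensable, via Proposition \ref{5_23}, without which one would only enumerate those $E\in EC(c_mF)$ lying inside $supp(c_m)$ and could not conclude equality of the two empty covers. Once that hurdle is cleared, the rest is a clean chain of citations.
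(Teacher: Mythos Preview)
Your proposal is correct and follows essentially the same route as the paper's proof: invoke Theorem~\ref{5_20} for pairwise disjointness, use Proposition~\ref{5_23} so that the index set in Theorem~\ref{orthoDecom} is all of $I$, and then appeal to the uniqueness of the smallest orthogonal partition (Theorem~\ref{5_22}) to match the two empty covers. You are in fact more careful than the paper in one place: the paper simply asserts $EC(c_mF)=\{\cup_{j\in J_i} supp(v_j):i=1,\ldots,a\}$, whereas you supply the short argument (via strictness and pairwise disjointness) showing that the $E_\ell^{(m)}$'s exhaust $EC(c_mF)$ rather than merely sitting inside it.
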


\begin{proof} By Theorem \ref{5_20} and Theorem \ref{orthoDecom},  both $ EC(c_1F) $ and $EC(c_2F) $ are pairwise disjoint and 
the orthogonal decompositions 
$$c_1 = \sum_{j \in J_1} \alpha_j v_j + \ldots + \sum_{j \in J_a} \alpha_j v_j,$$
$$c_2 = \sum_{j \in K_1} \alpha_j v_j + \ldots + \sum_{j \in K_b} \alpha_j v_j$$
provide the smallest orthogonal partitions of
 $\{v_i\}_{i \in I}$. 
Since $$EC(c_1F) = \Set{\cup_{j \in J_i} supp(v_j) \,:\,  i=1, \ldots, a}, $$ 
 $$EC(c_2F) = \Set{\cup_{j \in K_i} supp(v_j) \,:\,  i=1, \ldots, b}, $$ and the smallest orthogonal partitions of
 $\{v_i\}_{i \in I}$ is unique,  we have 
$$ EC(c_1F) =EC(c_2F).$$
\end{proof}

The following conjecture asserts the existence of a ``maximal'' strict scaling whose factor poset contains all possible factor posets of any strict scaling. 
A maximal strict scaling might be useful to construct a frame in signal processing when we need more representations in certain directions, for example in edge detection or noise detection in image processing. 
\begin{conj}
Let $J \subset I$ such that $\cup_{j \in J}supp(v_j)=\{1,\ldots,k\}$ and $\nexists J_0\subsetneq J$ with $\cup_{j \in J_0}supp(v_j)=\{1,\ldots,k\}$. Let $\{v_j\}_{j \in J_1}\cupdot \ldots \cupdot \{v_j\}_{j \in J_a}$ be the  smallest orthogonal partition of $\{v_j\}_{j \in J}$. 
 Then  there exists a scaling $c$ such that 
$$EC(cF) = \Set{ \cup_{j \in J_i} supp(v_j) \,:\, i = 1, \ldots, a }.$$
\end{conj}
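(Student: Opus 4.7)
The plan is to build $c$ as a generic convex combination of the minimal scalings indexed by $J$ and then read $EC(cF)$ off the orthogonal-block structure. Concretely, set
$$c := \sum_{j \in J} \alpha_j v_j, \quad \alpha_j > 0,\ \sum_{j \in J} \alpha_j = 1,$$
with $\alpha = (\alpha_j)_{j \in J}$ chosen generically in the open simplex $\Delta$. By Lemma \ref{5_15}, $c \in \mathcal{C}(F)$, and since $\cup_{j \in J} supp(v_j) = \Set{1, \ldots, k}$, $c$ is strict. Writing $E_i := \cup_{j \in J_i} supp(v_j)$, the orthogonality of distinct blocks in the smallest orthogonal partition forces the $E_i$'s to be pairwise disjoint and to partition $\Set{1, \ldots, k}$.

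First, I would show each $E_i \in \mathbb{F}(cF)$. Because $supp(v_j) \subseteq E_{i'}$ for $j \in J_{i'}$, and $E_i \cap E_{i'} = \emptyset$ when $i' \neq i$,
$$\sum_{\ell \in E_i} c(\ell) f_\ell f_\ell^* \;=\; \sum_{j \in J_i} \alpha_j \sum_{\ell=1}^k v_j(\ell) f_\ell f_\ell^* \;=\; \Bigl( \sum_{j \in J_i} \alpha_j \Bigr) I_n,$$
using that every $v_j$ is itself a Parseval scaling. Hence each $E_i$, and every disjoint union $\bigcupdot_{i \in S} E_i$, lies in $\mathbb{F}(cF)$.

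The core step is to show that, for a generic choice of $\alpha$, no other subset of $\Set{1, \ldots, k}$ appears in $\mathbb{F}(cF)$. Set $A_\ell := f_\ell f_\ell^* - \tfrac{1}{n} I_n$; then \emph{$E \in \mathbb{F}(cF)$} is equivalent to the linear relation $\sum_{j \in J} \alpha_j B_j^E = 0$ in $\alpha$, where $B_j^E := \sum_{\ell \in E} v_j(\ell) A_\ell$. Suppose $E$ is not a disjoint union of $E_i$'s, and pick $i^*$ with $\emptyset \subsetneq E \cap E_{i^*} \subsetneq E_{i^*}$. I claim this relation is not automatic on $\Delta$, so its solution set is a proper affine subspace and a generic $\alpha$ avoids it. Indeed, if $B_j^E = 0$ for every $j \in J_{i^*}$, then for each such $j$ one must have $supp(v_j) \subseteq E$ or $supp(v_j) \cap E = \emptyset$: otherwise $\sum_{\ell \in E \cap supp(v_j)} v_j(\ell) f_\ell f_\ell^*$ is a positive multiple of $I_n$, and after rescaling produces a Parseval scaling of $F$ supported in the proper subset $E \cap supp(v_j) \subsetneq supp(v_j)$, contradicting the minimality of $v_j$. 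Consequently $J_{i^*} = J_{i^*}^E \cupdot J_{i^*}^{E^c}$ with $J_{i^*}^E := \Set{j \in J_{i^*} : supp(v_j) \subseteq E}$; both parts are nonempty (from $E \cap E_{i^*} \neq \emptyset$ and $E_{i^*} \setminus E \neq \emptyset$) and mutually orthogonal (disjoint support unions), contradicting that $J_{i^*}$ is an indivisible block of the smallest orthogonal partition of $\Set{v_j}_{j \in J}$.

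Since there are only finitely many candidate $E$'s, the union of the resulting bad affine subspaces has measure zero in $\Delta$, and a generic $\alpha$ avoids them all. For such $c$, $\mathbb{F}(cF) = \Set{\bigcupdot_{i \in S} E_i : S \subseteq \Set{1, \ldots, a}}$, whose minimal nonempty elements are precisely $E_1, \ldots, E_a$, giving $EC(cF) = \Set{\cup_{j \in J_i} supp(v_j) : i = 1, \ldots, a}$, as required. I expect the primality/indivisibility step above to be the main obstacle: one must check carefully that the forced partition $J_{i^*} = J_{i^*}^E \cupdot J_{i^*}^{E^c}$ is genuinely nontrivial and orthogonal, so that it actually refines the smallest orthogonal partition — this is where both the hypothesis $\emptyset \subsetneq E \cap E_{i^*} \subsetneq E_{i^*}$ and the minimal-implies-prime principle enter essentially.
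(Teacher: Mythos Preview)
The statement you attempt to prove is listed in the paper as an open \emph{conjecture}, not a theorem: no proof is offered, and in the paragraph immediately following it the authors even express doubt, writing that ``the assumptions seem too weak for the result to be true.'' So there is no paper proof against which to compare your argument.

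That said, your argument appears to be correct and, as written, seems to settle the conjecture. Both substantive steps check out. First, each block $E_i=\cup_{j\in J_i}supp(v_j)$ (and hence any disjoint union of blocks) lies in $\mathbb{F}(cF)$ for every $\alpha$, by the computation you give. Second, for a subset $E$ cutting properly across some block $E_{i^*}$, the tightness relation $\sum_{j\in J}\alpha_j B_j^E=0$ cannot hold identically on the open simplex: if it did, each coefficient $B_j^E$ would vanish (the open simplex linearly spans $\R^{|J|}$), and your use of minimality then correctly forces, for every $j\in J_{i^*}$, either $supp(v_j)\subseteq E$ or $supp(v_j)\cap E=\emptyset$. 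This yields a nontrivial splitting $J_{i^*}=J_{i^*}^E\cupdot J_{i^*}^{E^c}$ with disjoint support unions, hence an orthogonal refinement of the block $J_{i^*}$, contradicting its indivisibility in the smallest orthogonal partition. Avoiding finitely many proper linear subspaces then produces the desired generic $c$, and the minimal nonempty elements of the resulting factor poset are exactly the $E_i$.

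Two points are worth flagging. You never use the hypothesis that $J$ is \emph{minimal} among subsets with $\cup_{j\in J}supp(v_j)=\{1,\ldots,k\}$; your genericity argument goes through for any $J$ with full support union, so you are in fact proving something stronger than what is conjectured. Relatedly, the paper recasts the conjecture as a Helly-type statement about the common intersection of the open convex hulls of \emph{all} minimal such $J$'s, which is what motivates their skepticism; your approach sidesteps that reformulation entirely by building $c$ from a single $J$ and arguing generically, which may explain why this route was not pursued by the authors.
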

This conjecture is equivalent to determining whether or not the following is true: if $\bigcup_{j \in J_1}supp(v_j)=\ldots=\bigcup_{j \in J_\ell}supp(v_j)=\{1,\ldots,k\}$, and for each $J_i$, there does not exist $J_0 \subsetneq J_i$ such that $\bigcup_{j \in J_0}supp(v_j)=\bigcup_{j \in J_i}supp(v_j)$, then $(conv\{v_j\}_{j \in J_1})^\circ \cap \ldots \cap (conv\{v_j\}_{j \in J_\ell})^\circ \neq \emptyset$. Based on results in polytope theory (Helly's Theorem, \cite{Ma02}), the assumptions seem too weak for the result to be true. However,  a counterexample or  a weaker result would be a substantial progress.

We end this section with the following observations related to the construction of scalable frames. 
As a consequence we would like to point out that if a vector gets repeated in a scalable frame \(\{f_i\}_{i=1}^k\), then the size of the minimal scalings $ \ab{\mathcal{M}(F)} $ doubles. 

\begin{observation}
\label{5_1}
Let $ \{f_i\}_{i \in K}$ be a unit-norm frame and $K_0 \subset K$.
If $\{f_i\}_{i \in K\setminus K_0}$ is scalable, then 
$$ \mathcal{C}\gp{ \{f_i\}_{i \in K\setminus K_0} } 
= \Set{ c |_{ K\setminus K_0} \,:\, c \in \mathcal{C}\gp{\{f_i\}_{i \in K}}, c(i) =0, i \in K_0 }. $$
\end{observation}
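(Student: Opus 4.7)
The plan is to prove the stated set equality by verifying both inclusions directly from the characterization of the scalability polytope
$$\mathcal{C}(\{f_i\}_{i \in K}) = \Set{ c \in \R^{|K|}_{\ge 0} \,:\, \sum_{i \in K} c(i) f_i f_i^* = I_n },$$
together with the analogous description with $K$ replaced by $K \setminus K_0$. No machinery beyond the characterization (\ref{Parseval}) of a Parseval frame is required.

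For the inclusion $(\supseteq)$, I take a $c \in \mathcal{C}(\{f_i\}_{i \in K})$ satisfying $c(i) = 0$ for every $i \in K_0$. The terms indexed by $K_0$ contribute nothing to the operator sum, so $\sum_{i \in K} c(i) f_i f_i^* = \sum_{i \in K \setminus K_0} c(i) f_i f_i^* = I_n$. This is precisely the assertion that the restriction $c|_{K \setminus K_0}$ lies in $\mathcal{C}(\{f_i\}_{i \in K \setminus K_0})$.

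For the inclusion $(\subseteq)$, I start from an arbitrary $d \in \mathcal{C}(\{f_i\}_{i \in K \setminus K_0})$, which exists by the scalability hypothesis on the subframe, and extend it by zero: define $\tilde{d} \in \R^{|K|}_{\ge 0}$ by $\tilde{d}(i) = d(i)$ for $i \in K \setminus K_0$ and $\tilde{d}(i) = 0$ for $i \in K_0$. Then
$$\sum_{i \in K} \tilde{d}(i) f_i f_i^* = \sum_{i \in K \setminus K_0} d(i) f_i f_i^* = I_n,$$
so $\tilde{d} \in \mathcal{C}(\{f_i\}_{i \in K})$ is a scaling of the full frame that vanishes on $K_0$ and whose restriction to $K \setminus K_0$ is exactly $d$. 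This exhibits $d$ as an element of the right-hand side.

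There is no genuine obstacle here; the content of the observation is the bookkeeping fact that zero entries of a nonnegative scaling vector render the associated frame vectors invisible to the Parseval identity, and the two inclusions are symmetric restrict/extend-by-zero constructions. The scalability hypothesis on $\{f_i\}_{i \in K \setminus K_0}$ is used only to guarantee that both sides are nonempty; the set-theoretic equality, viewed as subsets of $\R^{|K \setminus K_0|}_{\ge 0}$, would hold vacuously otherwise.
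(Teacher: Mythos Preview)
Your proof is correct. The paper states this result as an Observation without proof, and the argument you give --- verifying both inclusions via the restrict/extend-by-zero constructions applied to the characterization $\mathcal{C}(F) = \{c \in \R^{|K|}_{\ge 0} : \sum_i c(i) f_i f_i^* = I_n\}$ --- is exactly the intended routine verification.
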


\begin{observation}
Let $\mathcal{M}(F) $ be the set of minimal scalings of of a scalable frame $ F = \{f_i\}_{i \in K}$ and let $K_0 \subset K$.
If $\{f_i\}_{i \in K\setminus K_0}$ is scalable, then the minimal scalings of $\{f_i\}_{i \in K\setminus K_0}$  is the set 
$$
 \Set{ v |_{ K\setminus K_0} \,:\, v \in \mathcal{M}(F), v(i) =0, i \in K_0 }. $$
\end{observation}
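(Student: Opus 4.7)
The plan is to deduce this observation directly from the preceding Observation \ref{5_1} by noting that the restriction-extension correspondence between scalings preserves supports, and minimality of a scaling depends only on the set of frame vectors picked out by its support.

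First, I would set up the natural bijection. Observation \ref{5_1} identifies $\mathcal{C}(\{f_i\}_{i \in K \setminus K_0})$ with the subset $\Set{c \in \mathcal{C}(F)\,:\, c(i)=0 \text{ for all } i \in K_0}$ of $\mathcal{C}(F)$ via the restriction map $c \mapsto c|_{K \setminus K_0}$, with inverse given by extension by zero on $K_0$. The crucial point I would emphasize is that under this correspondence, if $c \in \mathcal{C}(F)$ vanishes on $K_0$, then $\operatorname{supp}(c) = \operatorname{supp}(c|_{K\setminus K_0}) \subseteq K \setminus K_0$, so the selected frame vectors $\Set{f_i \,:\, c(i) > 0}$ are literally the same subset of $F$ whether we view $c$ in $\R^K$ or in $\R^{K\setminus K_0}$.

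Next, I would handle both inclusions. For the inclusion $(\supseteq)$, take $v \in \mathcal{M}(F)$ with $v(i) = 0$ for $i \in K_0$. Then $v|_{K\setminus K_0}$ is a scaling of $\{f_i\}_{i \in K\setminus K_0}$ by Observation \ref{5_1}, and since $\Set{f_i \,:\, v(i)>0}$ has no proper scalable subframe (as $v$ is minimal in $\mathcal{M}(F)$), the restriction $v|_{K\setminus K_0}$ is a minimal scaling of the subframe. For the inclusion $(\subseteq)$, take $w \in \mathcal{M}(\{f_i\}_{i \in K \setminus K_0})$ and let $\tilde w \in \R^K$ be its extension by zero on $K_0$. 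Observation \ref{5_1} gives $\tilde w \in \mathcal{C}(F)$, and since $\Set{f_i \,:\, \tilde w(i) > 0} = \Set{f_i \,:\, i \in K \setminus K_0, w(i)>0}$ has no proper scalable subframe, we get $\tilde w \in \mathcal{M}(F)$. Clearly $\tilde w|_{K \setminus K_0} = w$ and $\tilde w$ vanishes on $K_0$, so $w$ lies in the right-hand set.

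I do not anticipate a serious obstacle here; the argument is essentially a bookkeeping exercise. The one subtlety worth stating carefully is the observation that minimality is defined purely in terms of the subframe indexed by $\operatorname{supp}(c)$, and this indexed subframe is unchanged when we pass between $c \in \R^K$ (with zeros on $K_0$) and $c|_{K \setminus K_0} \in \R^{K \setminus K_0}$. Once that point is made, both inclusions reduce immediately to Observation \ref{5_1}.
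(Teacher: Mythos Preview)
The paper states this as an observation without proof, so there is no argument to compare against. Your proposal is correct and is exactly the natural justification: the restriction/extension-by-zero bijection of Observation~\ref{5_1} preserves supports, and since minimality of a scaling $c$ depends only on the subframe $\{f_i : i \in \operatorname{supp}(c)\}$, minimality transfers in both directions.
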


\begin{observation}
\label{5_2}
Let $\mathcal{M}(F)$ be the set of minimal scalings of a scalable frame $ F=\{f_i\}_{i=1}^k$ and
let $f_{k+1} = f_i$ for some $i =1, \ldots, k$. 
Then the minimal scalings of \(\{f_i\}_{i=1}^{k+1}\) is the set \\
$  \Set{ \begin{array}{ll} (v(1), \ldots, v(k), 0)   \text{ or }  \\  (v(1), \ldots, v(i-1), 0, v(i+1), \ldots, v(k), v(i))  \end{array}
  \,:\, v \in  \mathcal{M}(F)}.$
\end{observation}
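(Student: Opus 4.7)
The plan is to establish the equality of sets by showing two containments, with the central mechanism being a simple observation: if $f_{k+1} = f_i$, then any Parseval identity for the extended frame $F' := \{f_j\}_{j=1}^{k+1}$ depends only on the \emph{sum} $c'(i) + c'(k+1)$ at the coinciding indices, since $c'(i) f_i f_i^* + c'(k+1) f_{k+1} f_{k+1}^* = \bigl(c'(i)+c'(k+1)\bigr) f_i f_i^*$. This gives a natural weight-splitting correspondence between scalings of $F$ and scalings of $F'$, and the minimality condition will force all of the weight at the duplicated vector to sit at a single index.

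For the forward direction, let $v \in \mathcal{M}(F)$ and let $c'$ be either of the two extensions listed. The splitting remark above shows $c' \in \mathcal{C}(F')$. To verify minimality, I would observe that in the first extension the active indices are $\{j : v(j) > 0\} \subseteq \{1, \dots, k\}$, and in the second they are $(\{j : v(j) > 0\} \setminus \{i\}) \cup \{k+1\}$; in both cases the corresponding indexed subfamily of $F'$ consists of exactly the same \emph{vectors} as $\{f_j : v(j) > 0\}$, possibly with $f_i$ relabelled as $f_{k+1}$. Any proper scalable subfamily would therefore transport (by the trivial relabeling bijection) to a proper scalable subframe of $\{f_j : v(j) > 0\}$, contradicting the minimality of $v$.

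For the reverse direction, let $c' \in \mathcal{M}(F')$. The main step is to show that one of $c'(i)$ or $c'(k+1)$ must vanish. Suppose for contradiction that both are positive. Define $d \in \R_{\geq 0}^{k+1}$ by $d(j) := c'(j)$ for $j \neq i, k+1$, $d(i) := c'(i) + c'(k+1)$, and $d(k+1) := 0$. By the splitting observation, $d$ is a scaling of $F'$, and $\mathrm{supp}(d) = \mathrm{supp}(c') \setminus \{k+1\}$, a proper subset of $\mathrm{supp}(c')$. Thus $\{f_j : j \in \mathrm{supp}(d)\}$ is a proper scalable subframe of $\{f_j : j \in \mathrm{supp}(c')\}$, contradicting the minimality of $c'$. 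Hence $c'(i) = 0$ or $c'(k+1) = 0$. In the first case define $v \in \R_{\geq 0}^k$ by $v(j) := c'(j)$ for $j \neq i$ and $v(i) := c'(k+1)$; in the second case define $v(j) := c'(j)$ for all $j \leq k$. In either case $v \in \mathcal{C}(F)$, and the same relabeling-bijection argument as in the forward direction shows that any proper scalable subframe of $\{f_j : v(j) > 0\}$ would produce one of $\{f_j : c'(j) > 0\}$, forcing $v \in \mathcal{M}(F)$. Then $c'$ has exactly the form claimed.

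The main subtlety, rather than the main difficulty, is the careful bookkeeping of indexed-family versus set-of-vectors language: since $f_{k+1} = f_i$ coincide as vectors, scalability of a subfamily of $F'$ must be compared to scalability of the corresponding subfamily of $F$ via the bijection that swaps the index $k+1$ with $i$. Once that bookkeeping is fixed, the proof reduces to the weight-splitting identity, which does all of the work.
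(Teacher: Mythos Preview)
The paper states this result as an \emph{Observation} and gives no proof, so there is nothing to compare your argument against. Your proof is correct: the weight-splitting identity $c'(i)f_if_i^* + c'(k+1)f_{k+1}f_{k+1}^* = (c'(i)+c'(k+1))f_if_i^*$ is exactly the right tool, and your use of it to rule out both duplicated indices being active in a minimal scaling is clean. The relabeling bijection you set up handles the indexed-family bookkeeping correctly in both directions, and your explicit acknowledgment of that subtlety is appropriate.
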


\section*{Acknowledgement}

Much of this work was done when 
R. Domagalski, H. Suh, and X. Zhang participated in the Central Michigan University NSF-REU program in the summer of 2014. A. Chan was the graduate student mentor. Kim was supported by the Central Michigan University FRCE Research Type A Grant \#C48143.

\end{document}